\DeclareMathAlphabet{\mathpzc}{OT1}{pzc}{m}{it}
\definecolor{grassgreen}{RGB}{92,135,39}
\newcommand{\hilb}{\mathscr{H}}
\renewcommand{\vec}[1]{{\mathchoice
                     {\mbox{\boldmath$\displaystyle{#1}$}}
                     {\mbox{\boldmath$\textstyle{#1}$}}
                     {\mbox{\boldmath$\scriptstyle{#1}$}}
                     {\mbox{\boldmath$\scriptscriptstyle{#1}$}}}}
\newcommand{\trace}{\mathrm{Tr}}
\newcommand{\eps}{\varepsilon}
\newcommand{\euclidnorm}[1]{\left\| {#1} \right\|_2}
\newcommand{\norm}[1]{\left\| {#1} \right\|}
\newcommand{\ipbig}[2]{{\big\langle {#1}, {#2} \big\rangle}}
\newcommand{\ip}[2]{{\left\langle {#1}, {#2} \right\rangle}}
\newcommand\restr[2]{{
  \left.\kern-\nulldelimiterspace 
  {#1}\vphantom{\big|} \right|_{#2}}}
\newcommand{\R}{\mathbb{R}}
\newcommand{\Z}{\mathbb{Z}}
\newcommand{\A}{\mathcal{A}}
\newcommand{\C}{\mathcal{C}}
\newcommand{\D}{\mathcal{D}}
\newcommand{\F}{\mathcal{F}}
\newcommand{\G}{\mathcal{G}}
\newcommand{\J}{\mathcal{J}}
\newcommand{\Q}{\mathcal{Q}}
\newcommand{\borel}{\mathscr{B}}
\newcommand{\ave}{\operatorname{E}}
\newcommand{\var}{\operatorname{Var}}
\newcommand{\GM}[2]{\mathcal{N}\!\left( {#1}, {#2}\right)}
\newcommand*\oline[1]{%
  \vbox{%
    \hrule height 0.6pt
    \kern0.25ex
    \hbox{%
      \kern-0.15em
      \ifmmode#1\else\ensuremath{#1}\fi
      \kern-0.15em
    }
  }
}
\newcommand{\obsq}{\bar{\vec{\mathrm q}}}
\newcommand{\ipar}{m}
\newcommand{\iparb}{{\bar{\ipar}}}
\newcommand{\Ntr}{{n_{\text{tr}}}}
\renewcommand{\H}{\mathcal{H}}
\renewcommand{\L}{\mathscr{L}}
\newcommand{\CM}{\mathscr{E}}
\newcommand{\cip}[2]{\left\langle{#1}, {#2}\right\rangle_{\!\CM}}
\newcommand{\ad}[1]{{#1}^\star}
\newcommand{\ut}[1]{\ensuremath{\tilde{#1}}}
\newcommand{\Exp}[1]{e^{{#1}}}
\newcommand{\commentout}[1]{\iffalse {#1} \fi}
\newcommand{\Vg}{\mathscr{V}_{\!g}}
\newcommand{\V}{\mathscr{V}}
\renewcommand{\O}{\mathcal{O}}
\newcommand{\QoI}{\Theta}
\newcommand{\ctrl}{z}
\newcommand{\Nc}{{n_\text{c}}}
\DeclareMathAlphabet      {\mathup}{OT1}{\familydefault}{m}{n}
\renewcommand{\beta}{\upbeta}
\newcommand{\precond}[1]{\C^{1/2}{#1}\C^{1/2}}
\newcommand{\GD}{\ensuremath{\Gamma_{\!\!D}}}
\newcommand{\GN}{\ensuremath{\Gamma_{\!\!N}}}
\def\qed{\rule{1.5ex}{1.5ex}}
\newcommand{\Grad}{{\QoI_\ipar}}
\newcommand{\Hess}{\QoI_{\ipar\ipar}}
\newcommand{\Zad}{\mathscr{Z}_\mathup{ad}}
\newcommand{\K}{\mathcal{K}}
\newcommand{\proj}[2]{\sum_{{#2}=1}^n \ip{{#1}}{e_{{#2}}}e_{{#2}} }
\newcommand{\vara}{a}
\newcommand{\varb}{b}
\renewcommand{\slugline}[2]{}
\renewcommand{\slugger}[2]{}
\begin{document}

\def\addressncsu{Department of Mathematics, North Carolina State University,
  Raleigh, NC, USA}
\def\addressnyu{Courant Institute of Mathematical Sciences, New York
  University, New York, NY, USA}
\def\addressucm{Applied Mathematics, School of Natural Sciences, University of California,
  Merced, CA, USA}
\def\addressomar{Institute for Computational Engineering \& Sciences,
  Department of Mechanical Engineering, and Department of Geological Sciences, The
  University of Texas at Austin, Austin, TX, USA}

\author{Alen Alexanderian\footnotemark[1]
  \and Noemi Petra\footnotemark[2]
  \and Georg Stadler\footnotemark[3]
  \and Omar~Ghattas{\footnotemark[4]}}
\renewcommand{\thefootnote}{\fnsymbol{footnote}}
\footnotetext[1]{\addressncsu\ (\email{alexanderian@ncsu.edu}).} 
\footnotetext[2]{\addressucm\ (\email{npetra@ucmerced.edu}).}
\footnotetext[3]{\addressnyu\ (\email{stadler@cims.nyu.edu}).}
\footnotetext[4]{\addressomar\ (\email{omar@ices.utexas.edu}).} 

\renewcommand{\thefootnote}{\arabic{footnote}}

\title{
Mean-variance risk-averse optimal control of systems governed by PDEs with random parameter 
fields using quadratic approximations\thanks{This work was partially
  supported by
NSF grants 1508713 and 1507009, %
and DOE grants
DE-FC02-13ER26128, %
DE-SC0010518, %
and DE-FC02-11ER26052.}} %

\maketitle
\newcommand{\slugmaster}{%
\slugger{juq}{}{}{}{}}%

\begin{abstract}
We present a method for optimal control of systems governed by partial
differential equations (PDEs) with uncertain parameter fields.  We consider an
objective function that involves the mean and variance of the control
objective,
leading to a risk-averse optimal control problem.  Conventional numerical
methods for optimization under uncertainty are prohibitive when applied to this
problem.
To make the optimal control problem tractable, we invoke a 
quadratic Taylor series approximation of the control objective with
respect to the uncertain parameter field. This enables deriving explicit
expressions for the mean and variance of the control objective in terms of its
gradients and Hessians with respect to the uncertain parameter.  
The risk-averse optimal control problem is then formulated as a PDE-constrained optimization
problem with constraints given by the forward and adjoint PDEs defining  
these gradients and Hessians.  The expressions
for the mean and variance of the control objective under the quadratic
approximation involve the trace of the (preconditioned) Hessian, and are thus
prohibitive to evaluate. To overcome this difficulty, we employ
trace estimators, which only require a modest number of Hessian-vector
products.
We illustrate our approach with two specific problems: the control of a
semilinear elliptic PDE with an uncertain boundary source term, and the control of a
linear elliptic PDE with an uncertain coefficient field. For the latter
problem, we derive adjoint-based expressions for efficient computation of the
gradient of the risk-averse objective with respect to the controls. Along with
the quadratic approximation and trace estimation, this ensures that the cost of
computing the risk-averse objective and its gradient with respect to the
control---measured in the number of PDE solves---is independent of the
(discretized) parameter and control dimensions, and depends only on the number
of random vectors employed in the trace estimation, leading to an efficient
quasi-Newton method for solving the optimal control problem.
Finally, we present a comprehensive numerical study of an optimal control
problem for fluid flow in a porous medium with uncertain permeability field. 
\end{abstract}

\begin{keywords}
Optimization under uncertainty, PDE-constrained optimization, optimal control, risk-aversion, PDEs
with random coefficients, Gaussian measure, Hessian, trace estimators
\end{keywords}

\begin{AMS}
60H15, %
60H35, %
35Q93, %
35R60, %
65K10  %
\end{AMS}

\section{Introduction}
An important class of problems arising in engineering and science is
the optimization or optimal control of natural or engineered systems
governed by partial differential equations (PDEs).  Often,
the PDE models of these systems are characterized by
parameters (or parameter functions) that are not known and are 
considered uncertain and modeled as random variables.
These parameters can appear for example as coefficients, boundary
data, initial conditions, or source terms.  Consequently, 
optimization of such systems should be done in a way that the computed
optimal controls or designs are robust with respect to the variability
in the uncertain parameters.

\paragraph{Literature survey and challenges}  There is a rich
body of literature on theoretical and computational aspects of optimal 
control of systems governed by PDEs~\cite{Troltzsch10, Gunzburger03,
  BieglerGhattasHeinkenschlossEtAl07, BorziSchulzSchillingsEtAl10,
  HinzePinnauUlbrichEtAl09}, and of optimization under
uncertainty (OUU)~\cite{BirgeLouveaux97,Sahinidis04, BeyerSendhoff2007,ShapiroDentchevaRuszczynski09}.
Recently there has been considerable interest in solution methods for
optimization problems lying at the intersection of these
two fields, namely, optimization problems governed by PDEs with
uncertain parameters~\cite{BorzivonWinckel09,
BorziSchulzSchillingsEtAl10,
BorziVonWinckel11, 
GunzburgerMing11,
HouLeeManouzi11,
TieslerKirbyXiuEtAl12, 
Kouri12,
KouriHeinkenschlossRidzalEtAl13, 
KouriHeinkenschlossRidzalEtAl14,
KunothSchwab13,
Kouri14,
ChenQuarteroni14,
KouriSurowiec16,
DambrineDapognyHarbrecht15
}.
To discuss the challenges of 
optimal control, and more generally optimization, of systems governed
by PDEs with uncertain parameters, we consider a real-valued
optimization objective $\QoI(\ctrl, \ipar)$ that depends on a control
variable $\ctrl$ and an uncertain parameter $\ipar$, both of which can be
finite- or infinite-dimensional.
Throughout this
article we refer to $\QoI(\ctrl, \ipar)$ as the control objective.
The evaluation of this control objective requires the solution of a system of PDEs.
Namely,  $\QoI(\ctrl, \ipar) :=
\tilde\QoI(\ctrl,\ipar,u)$ with $u = \mathcal S(\ctrl,\ipar)$, 
where $\mathcal{S}$ is a  
PDE solution operator. 
Here, we assumed that the system
of PDEs admits a unique solution $u$ for every pair $(\ctrl,\ipar)$ of
controls and parameters.
This dependence of $\QoI$ on the solution of a PDE makes the evaluation (and
the computation of derivatives) of $\QoI$ computationally expensive. 
The presence of uncertain parameters greatly compounds the
computational challenges of solving the PDE-constrained optimization
problem.

In an OUU problem, it is natural to seek optimal controls $\ctrl$ that
make $\Theta$ small in an average sense.  For example, a risk-neutral
optimal control approach seeks controls that solve 
\begin{equation}\label{equ:risk-neutral}
    \min_\ctrl \, \ave\{\QoI(\ctrl, \ipar)\}, 
\end{equation}
where $\ave\{\cdot\}$ denotes expectation over the uncertain parameter
$\ipar$.
If we seek controls that, in addition to 
minimizing the expected value of $\QoI$ with respect to $\ipar$,  
result in a small uncertainty in $\QoI$, we are led to
risk-averse optimal control.
In the present work, we use the variance of the control objective
as a risk measure, and seek optimal controls that solve the problem 
\begin{equation}\label{equ:risk-averse-general}
   \min_\ctrl \, \ave\{\QoI(\ctrl, \ipar)\} + \beta \var\{ \QoI(\ctrl, \ipar)\}.
\end{equation}
Here, $\var\{ \cdot\}$ denotes the variance with respect to $\ipar$,
and $\beta > 0$ is a \emph{risk-aversion} parameter that aims to
penalize
large variances of the control objective.  
This {\em mean-variance} formulation is
only one of several formulations for finding risk-averse optimal controls.
Other examples of more complex risk measures include the value at risk (VaR) and the
conditional value at risk
(CVaR)~\cite{RockafellarUryasev00,ShapiroDentchevaRuszczynski09}.
Compared to the mean-variance formulation, these
approaches do not symmetrically penalize the deviation of the quantity
of interest 
around the mean, which is desirable for instance in applications where the
control objective models a loss.
In~\cite{KouriSurowiec16}, 
the authors consider primal and
dual formulations of a risk-averse PDE-constrained OUU problem using
CVaR. 
They employ and study smooth
approximations of the primal formulation to enable the
application of derivative-based optimization methods to the CVaR
objective, and rely on quadrature-based discretizations in the
discretized %
parameter space.

To illustrate the main computational challenges involved in OUU
problems, let us consider the (simpler) risk-neutral problem.  A
common approach to cope with the expectation in the objective
function uses sampling over the random parameter space, 
$\ave\{\QoI(\ctrl, \ipar)\} \approx \sum_{i = 1}^n w_i \QoI(\ctrl,
\ipar_i)$, where $\{\ipar_i\}_{i = 1}^n$ is a sample set, and $w_i$
are sample weights.  In the context of PDE-constrained OUU,
evaluation of $\QoI(\ctrl, \ipar_i)$ requires solving the PDE
problem $u_i = \mathcal S(\ctrl,\ipar_i)$ for each sample point
$\ipar_i$.  The sample set $\{\ipar_i\}_{i = 1}^n$ is chosen either by
Monte Carlo sampling, where each $\ipar_i$ is a draw from the
distribution law of $\ipar$ (and $w_i = 1/n$ for every $i$), or, for a
suitably low-dimensional parameter space, 
based on quadrature rules (and $w_i$ are quadrature weights). The Monte
Carlo-based approach, sometimes referred to as sample average
approximation (SAA), is computationally prohibitive for OUU problems
governed by PDEs. This is due to the slow convergence of Monte Carlo
and the resulting large number of PDE solves for each evaluation of
the expectation.
Quadrature-based methods, obtained from tensorization of one-dimensional
quadrature rules, use regularity of $\QoI(\ctrl,\ipar)$ with respect to $\ipar$
and can accelerate convergence, but they suffer from the the curse of
dimensionality---the exponential growth of the number of quadrature points as
the dimension increases.  The use of sparse quadrature~\cite{Smolyak63} can
mitigate but not overcome the curse of dimensionality. Quadrature-based
methods can be improved significantly by using adaptive sparse grids (see
e.g.,~\cite{KouriHeinkenschlossRidzalEtAl13,BorziSchulzSchillingsEtAl10,
KouriHeinkenschlossRidzalEtAl14}, in which adaptive sparse grids are employed
to solve OUU problems); however these approaches are still computationally
expensive for problems with parameter dimensions in the order of
hundreds or thousands. 
Another class of methods for OUU problems are stochastic approximation (SA)
methods~\cite{RobbinsMonro51,Gaivoronskii78,Ermoliev83,RuszczynskiWojciech86}.
Similar to methods based on SAA, SA methods are computationally intractable for
PDE-constrained OUU problems with high-dimensional parameters due to their slow
convergence and the resulting need for a prohibitively large number of PDE
solves.

\paragraph{Approach}
We consider an uncertain parameter $\ipar$ that is modeled with a random field,
which can also be viewed as a function-valued random variable.
In the uncertainty quantification literature, it is a common to use an
a priori dimension reduction provided by a truncated Karhunen--Lo\`{e}ve (KL)
decomposition for such problems.
However, KL modes that appear unimportant in simulating the random
process may turn out to be important to the control objective.  Moreover, a
priori truncation of the KL expansion of a random field is most useful if the
eigenvalues of the covariance operator (of the uncertain parameter) exhibit
rapid decay. This is not always the case, for example, in presence of
small correlation lengths; in such cases, an a priori truncation needs to 
retain a large number of KL modes.
We do not follow such approaches to avoid bias introduced by
the truncated KL expansion.  Instead, we seek formulations that
preserve the problem's infinite-dimensional character and work in an
infinite-dimensional setting as long as possible. Moreover, we aim to
devise algorithms whose computational complexity, measured in the
number of PDE solves, is independent of the discretized parameter
dimension.

In the present work, we employ quadratic approximations of the
parameter-to-objective map, $\ipar \mapsto \QoI(\cdot, \ipar)$, to
render the computation of the control objective and its gradient (as
necessitated by a gradient-based optimization method) tractable.  
Related approaches for OUU with finite-dimensional uncertain parameters and 
inexpensive-to-evaluate (compared to problems governed by PDEs)
control objectives are used
in~\cite{DarlingtonPantelidesRustemEtAl99,DarlingtonPantelidesRustemEtAl00}.
More generally, linear or quadratic expansions with respect to
uncertain finite-dimensional parameters have also been used for robust
(finite-dimensional) optimization and reliability methods in
engineering applications; we refer, e.g., to
\cite{DiehlBockKostina06,Rackwitz01}.
Using this approach, we can compute the moments of the first and
second-order Taylor expansions of
$\QoI(\ctrl, \ipar)$ analytically.  For optimal
control problems with infinite-dimensional parameters, computation
of the derivatives with respect to the uncertain parameters and the
controls is prohibitive using the direct sensitivity approach (or
finite differences). Instead, we employ adjoint methods to avoid
dependence on the dimension of the discretized parameter field.
Our formulation is particularized to two model problems, the control of a
semilinear elliptic PDE with an uncertain boundary source term, and the
control of a linear elliptic PDE with an uncertain coefficient field.
The latter is motivated by industrial problems
involving the optimal control of flows in porous media. 

As we will see, using the quadratic approximation of $\ipar \mapsto
\QoI(\cdot, \ipar)$ 
results in an OUU objective function that involves traces of
operators that  depend on the Hessian of this mapping.
Since direct computation of these traces is 
prohibitive for high-dimensional problems 
(explicit computation of the Hessian requires as many PDE solves as
there are parameters), we use trace estimation either based on random
vectors, or on eigenvectors 
of the (preconditioned) Hessian at a nominal control. This
only requires the action of the Hessian on vectors and is thus well
suited for control problems governed by systems of PDEs.
\paragraph{Contributions}
The main contributions of this work are as follows: (1) For an uncertain
parameter field that follows a Gaussian distribution law, we derive analytic
expressions for the mean and variance of a quadratic approximation to the
parameter-to-objective map in infinite dimensions.  These results are the basis
for developing an efficient OUU approach that extends the work
in~\cite{DarlingtonPantelidesRustemEtAl00} to a method suitable for
large-scale PDE-constrained OUU problems.
(2) We propose a formulation of the risk-averse OUU problem as a
PDE-constrained optimization problem, 
with the constraints given by the
PDEs defining the adjoint-based expression for the gradient $\Grad$
and the linear action of the Hessian $\Hess$ of the parameter-to-objective 
map. 
Our method ensures that the cost of computing the risk-averse objective and its
gradient with respect to the control---measured in the number of PDE
solves---is independent of the (discretized) parameter and control dimensions,
and depends only on the number of random vectors used in the trace
estimation. 
(3) 
We fully elaborate our approach for the risk-averse control of an elliptic PDE
with uncertain coefficient field, and in particular derive the adjoint-based
gradient of the control objective with respect to the control.
We numerically
study various aspects of our
risk-aversion measure and of the efficiency of the method. 
The results show the effectiveness of our approach
in computing risk-averse optimal controls in a problem with a $3{,}000$-dimensional
discretized parameter space.

\paragraph{Limitations} 
We also remark on limitations of our method.
(1) Since we rely on
approximations based on Taylor expansions, our arguments require
smoothness (and proper
boundedness of the derivatives) of the parameter-to-objective map.
For the mean-variance
formulation used here, this smoothness mainly depends on the
governing PDEs and how the parameter enters these PDEs. 
(2)
Compared to sampling-based methods, our approach requires first and
second derivatives of $\QoI(\ctrl, \ipar)$ with respect to
$\ipar$, and thus appropriate adjoint solvers for the efficient
computation of these derivatives.
However, efficient methods for solution of optimal control problems
governed by PDEs will require adjoint-based derivatives with respect
to the control; only minor modifications are needed to obtain
derivatives with respect to the uncertain parameters.
(3) Our derivation of the mean and variance of the quadratic
approximation to the parameter-to-objective map assumes that the
parameter space is a Hilbert space, say $\hilb$, and that $\QoI(\ctrl,
\cdot)$ is defined and has the required derivatives in $\hilb$.  As
such, our framework does not apply to cases where the parameter space
is a general Banach space.  Even when we consider a Hilbert space
$\hilb$ of uncertain parameters, $\QoI(\ctrl, \ipar)$ may not be
defined for every $\ipar$ in $\hilb$ and have derivatives that are
bounded in $\hilb$, in which case the expressions for the mean and
variance could be used only formally to obtain an objective function
for a risk-averse OUU problem.  This is the case for the linear
elliptic PDE problem with uncertain coefficient field, where the
parameter-to-objective map is defined and has the required smoothness
only in a subspace
that has full measure. In section~\ref{sec:extensions}, we discuss such issues further and give conditions that
ensure that the expressions for the mean and variance are well
defined.
Extensions to more general cases is a subject of our future
work. 

\section{Preliminaries}
We let $\hilb$ be an infinite-dimensional real separable Hilbert space endowed with an
inner product $\ip{\cdot\,}{\cdot}$ and induced norm $\norm{\cdot}^2 =\ip{\cdot\,}{\cdot}$. 
We consider uncertain parameters that are modeled as 
spatially distributed random processes, which can be viewed as 
function-valued random variables. 
Let $\ipar$ denote such an uncertain parameter. 
We assume the distribution law of $\ipar$, which we denote by $\mu$, is supported on $\hilb$ and consider 
$\ipar$ as an $\hilb$-valued random variable. 
That is, $\ipar$ is a function, $\ipar: (\Omega, \Sigma, P) \to
(\hilb, \borel(\hilb))$, where $\Omega$ is a sample space, $\Sigma$ is an
appropriate sigma-algebra, and $P$ is a probability measure; here,
$\borel(\hilb)$ denotes the Borel sigma-algebra on $\hilb$. 
In what follows, with a slight abuse of notation, 
we denote the realizations of the uncertain parameter using the
same symbol $\ipar$.  

As is common practice, instead of working on the abstract probability
space $(\Omega, \F,
P)$, we work in the image space $(\hilb, \borel(\hilb), \mu)$,
where $\mu$ is the law of $\ipar$, and use 
$\ave\{ \phi(\ipar) \} = \int_\hilb \phi(m) \mu(dm)$ for an integrable function $\phi:\hilb \to \R$.
As mentioned in the introduction, we assume that $\ipar$ has a Gaussian
probability law, $\mu = \GM{\iparb}{\C}$. Here, $\C$ is a
self-adjoint, positive trace class operator and thus $\mu$ defines a Gaussian measure on $\hilb$. 

We denote by $\Zad$ the set of admissible controls, which is a closed convex subset
of $L^2(\D)$, where $\D \subset \R^d$ is a bounded open set with piecewise smooth boundary.
We consider the control objective, $\QoI$ as a real-valued function defined on $\Zad \times \hilb$.
We refer to the mapping $\ipar \mapsto \QoI(\cdot,\ipar)$ as the parameter-to-objective map.
For normed linear spaces $\mathscr{X}$ and $\mathscr{Y}$ we denote by $\mathcal{L}(\mathscr{X}, \mathscr{Y})$ 
the space of bounded linear transformations from $\mathscr{X}$ to $\mathscr{Y}$, and by $\mathcal{L}(\mathscr{X})$ the space of
bounded linear operators on $\mathscr{X}$. For a Hilbert space $\hilb$, we use $\mathcal{L}_\mathup{sym}(\hilb)$ 
to denote the subspace of $\mathcal{L}(\hilb)$ consisting of self-adjoint linear operators. 

\subsection{Linear and quadratic expansions}

Recall that for a function $f:\mathscr{X} \to \R$, where
$\mathscr{X}$ is a Banach space, existence of first and second
Fr\'{e}chet derivatives at $\iparb \in \mathscr{X}$ implies that
\begin{equation}\label{eq:Frechet}
   f(\ipar) = f(\iparb) + f'(\iparb)[\ipar - \iparb] + \frac12 f''(\iparb)[\ipar-\iparb, \ipar-\iparb] + 
   o(\norm{\ipar - \iparb}_\mathscr{X}^2),
\end{equation}
with $f'(\iparb) \in \mathscr{X}^*$ and $f''(\iparb) \in \mathcal{L}(\mathscr{X}, \mathscr{X}^*)$.
In the following, we consider the case where $\mathscr{X}$ is a Hilbert space $\hilb$, and hence the derivatives 
admit Riesz representers in $\hilb$. 

Next, we consider the control objective, $\Theta:\Zad \times\hilb \to
\R$ and assume for an arbitrary control $z \in \Zad$, 
existence of first and second
Fr\'{e}chet derivatives for 
the parameter-to-objective map $\ipar \mapsto \Theta(\ctrl, \ipar)$ 
at $\iparb \in \hilb$. 
Consider the linear and quadratic approximations of the parameter-to-objective map: 
\begin{align}
\QoI_\mathup{lin}(\ctrl, \ipar) &= \QoI(\ctrl, \iparb) + \ip{\Grad(\ctrl,
\iparb)}{\ipar - \iparb},\label{equ:linearapprox}\\
\QoI_\mathup{quad}(\ctrl,\ipar) &= \QoI(\ctrl,\iparb) +
\ip{\Grad(\ctrl,\iparb)}{\ipar - \iparb} + \frac12 \ip{\Hess(\ctrl,
\iparb)(\ipar - \iparb)}{\ipar-\iparb}. \label{equ:quadapprox}
\end{align}
Notice that for clarity, we denote first and second derivatives of $\QoI$ with respect
to $\ipar$ by $\Grad$ and $\Hess$, respectively.
In this paper, we mainly use the quadratic approximation $\QoI_\mathup{quad}$.

Using an approximation rather than the exact parameter-to-objective map
$\QoI(\ctrl, \ipar)$ enables computing the moments appearing in the
objective function of the (approximate) risk-averse OUU problem analytically.  This
computation is facilitated
by the fact that $\ipar$ has a Gaussian distribution law.  
Note also that the accuracy of such linear and quadratic approximations, considered in an 
average sense, can be related to the variance of the uncertain parameter. 
In section~\ref{sec:analytic}, where we derive analytic expressions for the mean and
variance of the local quadratic approximation to $\QoI(\ctrl, \ipar)$, we also
describe how the variance of $\ipar$ is related to the expected
value of the truncation error in the quadratic approximation.

\subsection{Probability measures on $\hilb$}\label{sec:measures}
Here we recall basics regarding Borel probability measures, and
Gaussian measures on infinite-dimensional Hilbert spaces.
Let $\mu$ be a Borel probability measure on $\hilb$ with finite first and 
second moments. The mean $\bar{\vara}$ of $\mu$ is an element of $\hilb$ such that,
\[
    \int_\hilb \ip{s}{\varb}\,\mu(ds) = \ip{\bar{a}}{\varb} \quad \text{for all } \varb \in \hilb.
\]
The covariance operator $\C$ of $\mu$ is a positive self-adjoint trace-class operator 
that satisfies
\[
   \int_\hilb \ip{\vara}{s-\bar{\vara}}\ip{\varb}{s-\bar{\vara}} \, \mu(ds) = \ip{\C \vara}{\varb} \quad
   \text{for all } \vara, \varb \in \hilb.
\]
It is straightforward to show that
$\int_\hilb \norm{s-\bar{\vara}}^2 \, \mu(ds) = \trace(\C)$, where $\trace(\C)$ denotes the trace of the
(positive self-adjoint) operator $\C$; see, e.g.,~\cite[p.~8]{Prato06}.
Note also that 
\begin{equation}\label{equ:Cvv}
\int_\hilb \ip{\varb}{s-\bar{\vara}}^2 \, \mu(ds) 
\\
= \ip{\C \varb}{\varb}.
\end{equation}
One can also show (see e.g.,\cite[Lemma 1]{AlexanderianGloorGhattas16})
that for a bounded linear operator $\K: \hilb \to \hilb$, 
\begin{equation}\label{equ:quadform_1st_moment}
   \int_\hilb \ip{\K(s - \bar{\vara})}{s - \bar{\vara}}\,\mu(ds) = \trace(\K \C) = \trace(\C^{1/2} \K \C^{1/2}).
\end{equation}

Now consider the case where the measure $\mu$ is a Gaussian, $\mu =
\GM{\bar{\vara}}{\C}$, where $\C$ is a positive, self-adjoint, trace-class
operator. The mean $\bar{\vara}$ is assumed to belong to the Cameron--Martin
space $\CM:=\text{Im}(\C^{1/2})$. 
The Cameron--Martin space is a dense subspace of
$\hilb$ and is a Hilbert space endowed with the inner product
$\ip{\cdot\,}{\cdot}_{\CM}:=\ip{\C^{-1/2}\cdot}{\C^{-1/2}\cdot}$~\cite{DaPratoZabczyk02}.
While the space $\CM$ is dense in $\hilb$, it is in some sense very ``thin''; more
precisely, $\mu(\CM) = 0$. 

We follow the construction
in~\cite{Bui-ThanhGhattasMartinEtAl13}, and define a Gaussian measure $\mu$ 
on $\hilb = L^2(\D)$, where $\D$ is a bounded domain with piecewise smooth boundary, as follows: 
define the covariance operator as the inverse of the square of a Laplacian-like operator:
\begin{equation}\label{eq:Cconcrete}
   \C = (-\kappa \Delta + \alpha I)^{-2} =: \mathcal{A}^{-2}, 
\end{equation}
where $\kappa,\alpha>0$, and the domain of $\A$ is given by 
$D(\A) = \left\{ u \in H^2(\D) : \nabla u \cdot \vec{n} = 0 \text{ on }\partial\D \right\}$. 
Here, $H^2(\D)$ is the Sobolev space of $L^2(\D)$ functions with square
integrable first and second weak derivatives, and $\vec{n}$ is the unit
outward normal for the boundary $\partial\D$. This construction of the operator 
$\C$ 
ensures that it is positive, self-adjoint, and of trace-class and thus 
the Gaussian measure $\mu = \GM{\bar{a}}{\C}$
on $\hilb$ is well-defined.
A Gaussian random field whose law is given by such a Gaussian measure 
has almost surely continuous realizations~\cite{Stuart10}.

As we saw in~\eqref{equ:quadform_1st_moment}, given a Borel probability measure on
$\hilb$ with bounded first and second moments, 
we can obtain a simple expression for the first moment of a quadratic
form on $\hilb$.  If $\mu$ is a Gaussian measure $\GM{\bar{\vara}}{\C}$ on
$\hilb$, we can also compute the second moment of a quadratic form (see Remark
1.2.9.\ in~\cite{DaPratoZabczyk02}). In particular, if we let $\K$ be a
self-adjoint bounded linear operator on $\hilb$, then 
\begin{equation}\label{equ:quadform_2nd_moment}
\int_\hilb \ip{\K (s - \bar{\vara})}{s - \bar{\vara}}^2 \, \mu(ds) = 2 \trace\big[(\precond{\K})^2\big] + \trace(\precond{\K})^2. 
\end{equation}

\section{Risk-averse OUU with quadratic approximation of the
parameter-to-objective map}\label{sec:analytic} As discussed above, we
consider a control objective
$\QoI = \QoI(\ctrl, \ipar)$, where $\ipar$ has a Gaussian
distribution law $\mu = \GM{\iparb}{\C}$. In
section~\ref{sec:moments_of_quad_approx}, we analytically derive
the moments of the quadratic approximation $\QoI_\mathup{quad}$
of $\QoI$.
We discuss
the approximation errors due to this approximation by
studying the expected value of the remainder 
term in the Taylor expansion.  In section~\ref{sec:ouu_objective}, using the expressions 
for the  moments of the quadratic approximation, we 
formulate the optimization problem for finding risk-averse optimal controls.
Extensions of our OUU approach to problems where
$\QoI(\ctrl, \ipar)$ is defined only in a subspace of $\hilb$ are discussed in
section~\ref{sec:extensions}.

\subsection{Quadratic approximation to a function of a Gaussian random variable}
\label{sec:moments_of_quad_approx}
In this section, we compute mean and variance of $\QoI_\mathup{quad}$
defined in~\eqref{equ:quadapprox} in the infinite-dimensional Hilbert space setting.
The following arguments are pointwise in the control $\ctrl \in \Zad$ and hence,
for notational convenience, we suppress the dependence of $\QoI$ on $\ctrl$.
We begin by establishing the following technical result:
\begin{lemma}\label{lem:basic}
Let $\mu = \GM{\bar{\vara}}{\mathcal{C}}$ be a Gaussian measure on $\hilb$, and $\varb \in \hilb$ be fixed, 
and let $\K$ be a bounded linear operator on $\hilb$. Then,
\[\int_\hilb \ip{\varb}{s-\bar{\vara}}\ip{\K (s - \bar{\vara})}{s - \bar{\vara}} \mu(ds) = 0.\]
\end{lemma}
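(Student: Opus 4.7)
The integrand is a cubic expression in $s - \bar{\vara}$, so by the basic symmetry of centered Gaussian measures (invariance under reflection through the mean), it ought to integrate to zero. I would make this precise in three short steps.

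\textbf{Step 1: Reduce to a centered Gaussian.} Change variables $t = s - \bar{\vara}$, so that the pushforward of $\mu$ under $s \mapsto s - \bar{\vara}$ is the centered Gaussian $\mu_0 = \GM{0}{\C}$. Writing $I$ for the integral in the lemma, this gives
\[
I = \int_\hilb \ip{\varb}{t}\,\ip{\K t}{t}\, \mu_0(dt).
\]

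\textbf{Step 2: Verify integrability.} By Cauchy--Schwarz and boundedness of $\K$, the absolute value of the integrand is bounded by $\norm{\varb}\,\norm{\K}\,\norm{t}^3$. Since $\mu_0$ is Gaussian on the separable Hilbert space $\hilb$ with trace-class covariance, all moments $\int_\hilb \norm{t}^p \mu_0(dt)$ are finite (this is a standard consequence of Fernique's theorem, or may be checked directly from $\int \norm{t}^2 \mu_0(dt) = \trace(\C)$ together with the Gaussian integrability of $\norm{t}$). Hence $I$ is absolutely convergent and the manipulations below are justified.

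\textbf{Step 3: Apply the reflection symmetry.} The centered Gaussian $\mu_0$ is invariant under the map $R: t \mapsto -t$, because its characteristic functional $\varb \mapsto \exp(-\tfrac12 \ip{\C \varb}{\varb})$ is even (equivalently, every finite-dimensional projection of $\mu_0$ is a centered Gaussian on $\R^n$, which is manifestly symmetric under negation). Applying the change of variables $t \mapsto -t$ in $I$ and using $\ip{\varb}{-t} = -\ip{\varb}{t}$ and $\ip{\K(-t)}{-t} = \ip{\K t}{t}$ yields
\[
I = \int_\hilb \ip{\varb}{-t}\,\ip{\K(-t)}{-t}\, \mu_0(dt) = -\int_\hilb \ip{\varb}{t}\,\ip{\K t}{t}\, \mu_0(dt) = -I,
\]
so $I = 0$.

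\textbf{Main obstacle.} There is no serious obstacle; the only point requiring care is the integrability in Step~2, ensuring that the formal cancellation in Step~3 is not of the $\infty - \infty$ type. Once integrability is in hand, the reflection symmetry of the centered Gaussian does all the work, and no explicit moment computation or Karhunen--Lo\`eve expansion is needed.
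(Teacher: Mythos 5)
Your proof is correct, and it takes a genuinely different route from the paper's. The paper also centers the measure and uses the same dominating bound $\norm{\K}\norm{\varb}\norm{s}^3$ with finite third moment, but it then expands $s$ in the orthonormal eigenbasis of $\C$, passes to finite-dimensional projections $\pi_n(s)$, invokes the Lebesgue Dominated Convergence Theorem, and finally reduces the claim to the vanishing of third moments of a centered $n$-variate Gaussian (citing an external reference for that fact). You bypass the entire projection/DCT machinery by observing that the centered Gaussian $\GM{0}{\C}$ is invariant under the reflection $t \mapsto -t$ (justified through its characteristic functional, which uniquely determines a Borel measure on a separable Hilbert space), so the odd integrand forces $I = -I = 0$. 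Your argument is shorter and strictly more general: it needs only central symmetry of the measure plus integrability, not Gaussianity, whereas the paper's eigenbasis-projection technique is heavier but is of a piece with the computational style used elsewhere in the paper (e.g., the second-moment identity \eqref{equ:quadform_2nd_moment}), where explicit finite-dimensional reductions are genuinely needed. The one point you were right to flag is integrability, which both proofs handle identically; your appeal to Fernique (or the moment bound of Theorem~\ref{thm:Gaussian_moments} in the paper's appendix) settles it.
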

\begin{proof}
Without loss of generality, we assume $\bar{\vara} = 0$.
Let $\{e_i\}_1^\infty$ be an
orthonormal basis of eigenvectors of $\mathcal{C}$
with corresponding positive eigenvalues $\{\lambda_i\}_1^\infty$.
By $\pi_n(s) = \proj{s}{j}$ we denote the orthogonal projection
onto the span of the first $n$ eigenvectors.
Observe that
$\ip{\varb}{s}\ip{\K s}{s} = \lim_{n \to \infty} \ip{\varb}{\pi_n(s)}\ip{\K \pi_n(s)}{\pi_n(s)}$,
and that $| \ip{\varb}{\pi_n(s)}\ip{\K \pi_n(s)}{\pi_n(s)}| \leq \norm{\K}\norm{\varb} \norm{s}^3$.
Since $\int_\hilb \norm{s}^3 \, \mu(ds) < \infty$, we can apply
the Lebesgue Dominated Convergence Theorem to obtain
\[
\begin{aligned}
\int_\hilb &\ip{\varb}{s}\ip{\K s}{s}\, \mu(ds) =
\lim_{n \to \infty} \int_\hilb \ip{\varb}{\pi_n(s)}
\ip{\K \pi_n(s)}{\pi_n(s)} \, \mu(ds)\\
&= 
\lim_{n \to \infty} \sum_{i,j,k=1}^n %
\ip{\varb}{e_i} \ip{\K e_j}{e_k}
\int_{\hilb} \ip{s}{e_i}\ip{s}{e_j}\ip{s}{e_k} \, \mu(ds) = 0,
\end{aligned}
\]
where in the last step we used
that the random $n$-vector $\vec{Y}:\hilb \to \R^n$ defined by $\vec{Y}(s) = \big(\ip{s}{e_1}, \ip{s}{e_2}, \ldots, \ip{s}{e_n}\big)$ 
is an $n$-variate Gaussian whose distribution law is
$\mu \circ \vec{Y}^{-1} = \GM{\vec{0}}{\diag(\lambda_1, \ldots, \lambda_n})$.
Note that we also use the
result (see, e.g.,~\cite{Withers85}) that for a mean zero $n$-variate normal random vector $\vec{Y}$, 
$\ave\{Y_{\alpha_1} Y_{\alpha_2} Y_{\alpha_3}\} = 0$ 
for $\alpha_1, \alpha_2, \alpha_3 \in \{1, \ldots, n\}$. 
\end{proof}

\subsubsection{Mean and variance of the quadratic approximation}
Next, we derive expressions for the mean and variance of
$\QoI_\mathup{quad}$
in the infinite-dimensional
Hilbert space setting.
\begin{proposition}\label{prp:quadmoments}
Let $\QoI:(\hilb, \borel(\hilb), \mu) \to (\R, \borel(\R))$ be a function that is
twice differentiable at $\iparb \in \hilb$, with gradient $\Grad(\iparb) \in \hilb$ and
Hessian $\H(\iparb) \in \mathcal{L}_\mathup{sym}(\hilb)$.
Let $\QoI_\mathup{quad}:(\hilb, \borel(\hilb), \mu) \to (\R,
\borel(\R))$ be %
as defined in~\eqref{equ:quadapprox}. Then,
\begin{align}
\ave \{\QoI_\mathup{quad}\}&= \QoI(\iparb) + \frac12 \trace\big[\precond{\Hess(\iparb)}\big],\label{eq:EQoI}\\
\var \{\QoI_\mathup{quad}\}&= \ip{\Grad(\iparb)}{\C[\Grad(\iparb)]} +
    \frac12 \trace\big[ (\precond{\Hess(\iparb)})^2\big].\label{eq:VQoI}
\end{align}
\end{proposition}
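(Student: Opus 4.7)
The plan is to expand $\ave\{\QoI_\mathup{quad}\}$ and $\var\{\QoI_\mathup{quad}\}$ directly from the definition~\eqref{equ:quadapprox} of $\QoI_\mathup{quad}$, then reduce each resulting integral to one of the identities~\eqref{equ:Cvv}, \eqref{equ:quadform_1st_moment}, or~\eqref{equ:quadform_2nd_moment}, or to Lemma~\ref{lem:basic}. Integrability is automatic: $\mu$ is Gaussian with trace-class covariance, so $\norm{\ipar - \iparb}$ has moments of every order, while $\QoI_\mathup{quad}$ grows at most quadratically in $\norm{\ipar - \iparb}$ since $\Grad(\iparb) \in \hilb$ and $\Hess(\iparb)$ is bounded.

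The mean is essentially immediate. Linearity of the expectation splits $\ave\{\QoI_\mathup{quad}\}$ into three pieces: $\QoI(\iparb)$; the linear term $\ave\{\ip{\Grad(\iparb)}{\ipar - \iparb}\}$, which vanishes because $\ipar$ has mean $\iparb$; and the quadratic term $\tfrac{1}{2}\ave\{\ip{\Hess(\iparb)(\ipar - \iparb)}{\ipar - \iparb}\}$, which is exactly of the form handled by~\eqref{equ:quadform_1st_moment} with $\K = \Hess(\iparb)$. This yields~\eqref{eq:EQoI}.

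For the variance, I would introduce the centered random variables $X := \ip{\Grad(\iparb)}{\ipar - \iparb}$ and $Y := \tfrac{1}{2}\ip{\Hess(\iparb)(\ipar - \iparb)}{\ipar - \iparb} - \tfrac{1}{2}\trace(\precond{\Hess(\iparb)})$, so that $\QoI_\mathup{quad} - \ave\{\QoI_\mathup{quad}\} = X + Y$ with $\ave\{X\} = \ave\{Y\} = 0$. Then $\var\{\QoI_\mathup{quad}\} = \ave\{X^2\} + 2\ave\{XY\} + \ave\{Y^2\}$ is evaluated piece by piece: $\ave\{X^2\} = \ip{\Grad(\iparb)}{\C\,\Grad(\iparb)}$ by~\eqref{equ:Cvv}; $\ave\{XY\} = 0$ because its two summands vanish by Lemma~\ref{lem:basic} (the third-order term, with $\varb = \Grad(\iparb)$ and $\K = \Hess(\iparb)$) and by $\ave\{X\} = 0$ (the constant multiple of $X$); and $\ave\{Y^2\}$ is obtained by expanding the square, applying~\eqref{equ:quadform_2nd_moment} to the quartic term and~\eqref{equ:quadform_1st_moment} to the linear-in-quadratic-form cross term, after which the three contributions proportional to $\trace(\precond{\Hess(\iparb)})^2$ algebraically cancel and only $\tfrac{1}{2}\trace[(\precond{\Hess(\iparb)})^2]$ survives. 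Summing the three pieces delivers~\eqref{eq:VQoI}.

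The only step that is not pure bookkeeping is the vanishing of the cross term $\ave\{XY\}$, which is precisely what Lemma~\ref{lem:basic} was proved to handle; Gaussianity enters there through the vanishing of third-order moments of centered Gaussians, and enters again via~\eqref{equ:quadform_2nd_moment}. Beyond these two inputs, the derivation is a mechanical computation, so I do not anticipate any real analytical obstacle.
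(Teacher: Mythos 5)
Your proposal is correct and follows essentially the same route as the paper's proof: the mean via vanishing of the linear term and the identity~\eqref{equ:quadform_1st_moment}, and the variance via expanding a square, killing the gradient--Hessian cross term with Lemma~\ref{lem:basic}, and applying~\eqref{equ:Cvv} and~\eqref{equ:quadform_2nd_moment} so that the $\trace(\precond{\Hess})^2$ contributions cancel. The only difference is bookkeeping: you center the quadratic form before squaring, whereas the paper subtracts $\QoI(\iparb)$ and uses $\var\{Z\} = \ave\{Z^2\} - \ave\{Z\}^2$; the resulting computations are term-for-term identical.
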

\begin{proof}
The first statement follows from,
\[
   \begin{aligned}
   \ave\{\QoI_\mathup{quad}\} = \int_\hilb \QoI_\mathup{quad}(\ipar) \mu(d\ipar)
           &= \QoI(\iparb) + \frac12 \int_\hilb \ip{\Hess(\iparb)(\ipar - \iparb)}{\ipar-\iparb} \mu(d\ipar)\\
           &= \QoI(\iparb) + \frac12 \trace\big[\Hess(\iparb)\C\big]
           = \QoI(\iparb) + \frac12 \trace\big[\precond{\Hess(\iparb)}\big].
   \end{aligned}
\]

To derive the expression for the variance, 
first note that the variance of $\QoI_\mathup{quad}(\ipar)$ equals the variance of 
$\QoI_\mathup{quad}(\ipar) - \QoI(\iparb)$. Thus,
\begin{equation}\label{equ:var}
    \var\{\QoI_\mathup{quad}\} = \ave\{ (\QoI_\mathup{quad}(\ipar) - \QoI(\iparb))^2 \} - \ave\{\QoI_\mathup{quad}(\ipar) - \QoI(\iparb)\}^2. 
\end{equation}
The first term on the right hand side is given by  
\begin{equation*}
\begin{aligned}
\ave\{ &(\QoI_\mathup{quad}(\ipar) - \QoI(\iparb))^2 \}
=
\ave\Big\{ \big(\ip{\Grad(\iparb)}{\ipar - \iparb)} + \frac12 \ip{\Hess(\iparb)(\ipar - \iparb)}{\ipar-\iparb}\big)^2\Big\}\\
& =\ave\big\{ \ip{\Grad(\iparb)}{\ipar - \iparb)}^2\big\} + \frac14 \ave\big\{\ip{\Hess(\iparb)(\ipar - \iparb)}{\ipar-\iparb}^2\big\}\\
&\quad + \ave\big\{ \ip{\Grad(\iparb)}{\ipar - \iparb)} \ip{\Hess(\iparb)(\ipar - \iparb)}{\ipar-\iparb}\big\} \\ 
& = 
\ip{\Grad(\iparb)}{\C[\Grad(\iparb)]} + \frac14
\trace\big[\precond{\Hess(\iparb)}\big]^2 +
    \frac12 \trace\big[ (\precond{\Hess(\iparb)})^2 \big].
\end{aligned}
\end{equation*}
This, along with
$\ave\{\QoI_\mathup{quad}(\ipar) - \QoI(\iparb)\} = \frac12 \trace(\precond\Hess)$ 
and~\eqref{equ:var} finishes the proof.
\end{proof}

Note that the expressions for the mean and variance of
the linear approximation $\QoI_\mathup{lin}$ defined
in~\eqref{equ:linearapprox} consist of only the first terms
in~\eqref{eq:EQoI} and \eqref{eq:VQoI}, respectively.
We also point out an intuitive interpretation of the
\emph{covariance-preconditioned} Hessian, $\precond{\Hess}$, in the expressions
for the mean and variance in~\eqref{eq:EQoI} and \eqref{eq:VQoI}. As the
Hessian $\Hess$ only appears preconditioned by the covariance $\C$, the
second-order contributions to the expectation and the variance are
large only if the dominating eigenvector directions of $\C$ and
$\Hess$ are ``similar''.
More precisely, the eigenvectors of $\C$ that correspond to large eigenvalues
(i.e., directions of large uncertainty) only have a significant influence on
the mean and variance if these directions are also important for the Hessian of
the prediction. Conversely, important directions for the prediction Hessian
only result in significant contributions to the second-order approximation of
mean and variance if the uncertainty in these directions is
significant.

We point out that while the Gaussian assumption on the distribution law of
$\ipar$ is required for derivation of the expression for $\var
\{\QoI_\mathup{quad}\}$ in Proposition~\ref{prp:quadmoments}, the expression
for $\ave \{\QoI_\mathup{quad}\}$ can be derived without this assumption. 
Namely, it holds if the law
of $\ipar$ is any Borel probability measure on $\hilb$ with bounded first and second moments. 
These assumptions on the law of $\ipar$ are also sufficient for  
deriving the expressions for the mean and variance of
$\QoI_\mathup{lin}$,
\[
   \ave\{ \QoI_\mathup{lin} \} = \QoI(\iparb), \quad \var\{  \QoI_\mathup{lin} \} = \ip{\Grad(\iparb)}{\C[\Grad(\iparb)]}.
\]
Here, the expression for the mean is immediate and the one for variance follows
from~\eqref{equ:Cvv}.

\subsubsection{Expected value of the truncation error}
\label{sec:trucation_error}
Next, we discuss the error due to replacing $\QoI$ by
the quadratic approximation $\QoI_\mathup{quad}$. Assuming sufficient
smoothness and boundedness of the derivatives of $\QoI$, we study the
expected value of the truncation error as the uncertainty in the parameter $\ipar$ decreases,
i.e., we consider $\GM{\iparb}{\eps \C}$, where $\eps > 0$
approaches zero.  To gain intuition,  consider the case when the
covariance operator is such that parameter draws $\ipar$ have a high
probability of being close to the mean $\iparb$, where the quadratic
approximation is more accurate. In this case, we can expect the truncation
error to be small.
To derive a quantitative estimate, we assume that $\QoI$ is three
times continuously differentiable such that the remainder term
in the quadratic expansion \eqref{eq:Frechet} has the form:
\begin{equation}\label{eq:remainder}
R(\ipar;\iparb) := \QoI(\ipar) - \QoI_\mathup{quad}(\ipar) = \frac{1}{3!} \QoI^{(3)}(\xi)[\ipar-\iparb,\ipar-\iparb,\ipar-\iparb].
\end{equation}
Here, $\QoI^{(3)}$ denotes the third derivative with respect to
$\ipar$ at $\xi$, which is an element of the line segment
between $\iparb$ and $\ipar$. Note that this and the following
considerations are pointwise in the control variable $\ctrl$.
We are interested in the
expectation value of the remainder term, which we will relate to
$\trace(\C)$, the
average variance of $\ipar$.

Assuming $\QoI^{(3)}$ is a uniformly bounded trilinear map, i.e.,
for all $\xi \in \hilb$,
$ %
  | \QoI^{(3)}(\xi)[u, v, w]| \leq K \norm{u}\norm{v}\norm{w} \text{ for
    all } u, v, w\in \hilb,
$ %
we obtain
\begin{multline}\label{equ:boundCS}
\int_\hilb |R(\ipar; \iparb)|\, \mu(d\ipar) \leq K \int_\hilb \norm{\ipar-\iparb}^3\,\mu(d\ipar)
= K \int_\hilb \norm{\ipar-\iparb}\norm{\ipar-\iparb}^2\,\mu(d\ipar)\\
\leq K \left(\int_\hilb \norm{\ipar-\iparb}^2\, \mu(d\ipar)\right)^{1/2} \left(\int_\hilb\norm{\ipar-\iparb}^4\,\mu(d\ipar)\right)^{1/2}.
\end{multline}
Next, recall that $\int_\hilb \norm{\ipar-\iparb}^2\, \mu(d\ipar) = \trace(\C)$. Moreover, using~\eqref{equ:quadform_2nd_moment} with $\mathcal{K} = I$,
we have 
\begin{equation}\label{equ:bound_forth_moment}
\int_\hilb \norm{\ipar - \bar{\ipar}}^4 \, \mu(d\ipar) = 2 \trace(\C^2) + \trace(\C)^2  \leq  3\trace(\C)^{2},
\end{equation}
where we have also used $\trace(\C^2) \leq  \trace(\C)^2$.
Therefore, using~\eqref{equ:boundCS} and~\eqref{equ:bound_forth_moment}, we have 
\[
\int_\hilb |R(\ipar; \iparb)|\, \mu(d\ipar) 
\leq \sqrt{3}K \trace(\C)^{3/2}.
\]
Now, if we consider a family of laws $\mu_\eps = \GM{\iparb}{\eps \C}$, $\eps > 0$, for $\ipar$, then, the 
expected value of the remainder \eqref{eq:remainder}
is $\mathcal{O}(\eps^{3/2})$, as $\eps \to 0$.
This should be contrasted with the expected value of the remainder term for the linear expansion which can 
be shown to be $\mathcal{O}(\eps)$. Note that if $\QoI$ is cubic, the
third derivative $\QoI^{(3)}$ is constant and the expectation over the
remainder term vanishes since $\ipar$ follows a
Gaussian distribution that is symmetric with respect to its mean.

The above argument regarding the expected value of the remainder in a Taylor expansion can 
be generalized for higher order expansions; see appendix~\ref{apdx:taylor_series}. 

\subsection{The OUU objective function}\label{sec:ouu_objective}
We can now give the explicit form for the objective function for the 
risk-averse OUU problem
\eqref{equ:risk-averse-general}, in which we use $\QoI_\mathup{quad}$
rather than $\QoI$, and thus \eqref{eq:EQoI} and \eqref{eq:VQoI}:
\begin{equation}\label{equ:ouu-prob-general}
\begin{split}
\QoI(z, \iparb) &+ \frac12 \trace\big[\precond{\Hess(z, \iparb)}\big]
   \\&+ \frac\beta 2 \Big\{\ip{\Grad(z, \iparb)}{\C[\Grad(z, \iparb)]} +
    \frac12 \trace\Big[ (\precond{\Hess(z, \iparb)})^2  \Big]\Big\} + \frac{\gamma}{2}\norm{\ctrl}^2.
\end{split}
\end{equation}
Note that we have also added the control cost
$\frac{\gamma}{2}\norm{\ctrl}^2$ in \eqref{equ:ouu-prob-general}.  The
numerical computation of operator traces appearing in the expressions
for the mean and variance of the quadratic approximation is expensive
and can be prohibitive for inverse problems governed by PDEs.  Hence,
we employ approximations obtained by randomized trace
estimators~\cite{AvronToledo11,Roosta-KhorasaniAscher14}, which require
only the application of the operator to (random) vectors and 
provide reasonably accurate trace estimates using a small
number $\Ntr$ of trace estimator vectors (see also~\cite[Appendix A]{AlexanderianPetraStadlerEtAl16}
for a result on an infinite-dimensional Gaussian trace estimator). 
Trace estimation for
$\trace(\precond{\Hess})$ and $\trace\big[(\precond{\Hess})^2\big]$
  amounts to
\begin{equation}\label{equ:trace_est}
\begin{aligned}
    \trace(\precond{\Hess}) &\approx \frac{1}{\Ntr} \sum_{j = 1}^\Ntr \ip{\zeta_j}{\Hess\zeta_j},\\
    \trace\Big[(\precond{\Hess})^2\Big] &\approx \frac{1}{\Ntr} \sum_{j = 1}^\Ntr 
    \ip{\Hess\zeta_j}{\C[\Hess\zeta_j]},
\end{aligned}
\end{equation}
where $\zeta_j$, $j \in \{1, \ldots, \Ntr\}$ are draws from the measure $\nu = \GM{0}{\C}$.
This form of the trace estimators is justified by the identities
\[
\int_\hilb \ip{\Hess \zeta}{\zeta} \, \nu(d\zeta) =
\trace(\precond{\Hess}), \quad
\int_\hilb \ip{\Hess \zeta}{\C [ \Hess \zeta]} \, \nu(d\zeta) = \trace\big[(\precond{\Hess})^2\big].
\]
Replacing the operator traces in the OUU objective
function~\eqref{equ:ouu-prob-general} using trace estimators results
in the OUU objective function
\begin{subequations}\label{equ:ouu-objective-randomized}
\begin{equation}\label{equ:ouu-objective-randomized1}
\begin{aligned}
\J(z) := \QoI(z, \iparb) 
&+ \frac1{2\Ntr} \sum_{j = 1}^\Ntr \ip{\zeta_j}{\psi_j}\\
&+ \frac\beta2 \Big\{\ip{\Grad(z, \iparb)}{\C[\Grad(z, \iparb)]} 
+ \frac1{2\Ntr} \sum_{j = 1}^\Ntr\ip{\psi_j}{\C\psi_j} 
    \Big\} + \frac{\gamma}{2} \norm{z}^2,
\end{aligned}
\end{equation}
where for $j \in \{1, \ldots, \Ntr\}$,
\begin{equation}
\psi_j = \Hess(z, \iparb) \zeta_j, \quad \zeta_j \sim \GM{0}{\C}.
\end{equation}
\end{subequations}

As an alternative to the randomized estimator in~\eqref{equ:trace_est}, 
we can use 
\begin{equation*}
    \trace(\precond{\Hess}) \approx \sum_{j = 1}^\Ntr \ip{w_j}{\Hess w_j},\quad
    \trace\big[(\precond{\Hess})^2\big] \approx \sum_{j = 1}^\Ntr
    \ip{\Hess w_j}{\C[\Hess w_j]},
\end{equation*}
with $w_j = \C^{1/2} v_j$, where $\{ v_j \}_{j = 0}^\infty$ is an orthonormal 
basis of $\hilb$, and $\Ntr$ is an appropriate truncation level. One
possibility is to choose 
$v_j$, $j \in \{1, \ldots, \Ntr\}$ as the  
dominant eigenvectors of $\C^{1/2}{\Hess}(\iparb, z_0)\C^{1/2}$, where
$z_0$ is a nominal control variable. 
Since in many applications the operator $\precond{\Hess}$ has a rapidly
decaying spectrum, $\Ntr$ can be chosen small.  While such an
estimator is tailored
to the Hessian evaluated at $z_0$, we have observed it to
perform well for values of the control variable in a neighborhood of $z_0$. We
will demonstrate the utility of this approach in our computational results.

Note that the existence of minimizers 
for $\J(z)$ (as well as \eqref{equ:ouu-prob-general})
depends on the control space and on properties of $\Grad$ and $\Hess$,
and must be argued on a case-by-case basis.

\renewcommand{\S}{\mathcal{S}}
\subsection{Extensions}
\label{sec:extensions}
In some applications, the control objective $\QoI(\ctrl, \ipar)$ might
be defined only for $\ipar$ in 
a Banach subspace $\mathscr{X} \subset \hilb$ with $\mu(\mathscr{X})=1$. 
Let $(\mathscr{X}, \norm{\cdot}_\mathscr{X})$ be such a subspace, and 
recall that since the Cameron--Martin space $\CM$ is compactly embedded in 
all subspaces of $\hilb$ that have full measure~\cite{Stuart10}, $\CM$ is compactly embedded in $\mathscr{X}$.
Moreover, when using quadratic approximations, we need derivatives at $\iparb \in \CM$. It is thus 
reasonable to require existence of derivatives only in the Cameron--Martin space.
In such cases, one might be tempted to consider the restriction of $\QoI$ 
to the Cameron--Martin space $\CM$ and use 
\begin{equation}\label{equ:taylor_on_cm}
    \QoI( \ipar) \approx \QoI(\iparb) + \ip{\Grad( \iparb)}{\ipar-\iparb}_{\CM\!,\CM^*}
          + \frac12 \ip{\Hess(\iparb)(\ipar-\iparb)}{\ipar-\iparb}_{\CM\!, \CM^*}, \quad \ipar \in \CM. 
\end{equation}
Here $\ip{\cdot}{\cdot}_{\CM,\CM^*}$ denotes the duality pairing between $\CM$ and its dual $\CM^*$, 
and $\Grad(\iparb) \in \CM^*$ and $\Hess(\iparb) \in \mathcal{L}(\CM, \CM^*)$ are the gradient 
and Hessian of $\QoI(\ipar)$ at $\ipar = \iparb$, respectively. Note that we have suppressed the dependence
of $\QoI$ on $\ctrl$. 

The definition~\eqref{equ:taylor_on_cm}, however, is not meaningful from a measure-theoretic point of view as $\CM$ has measure zero.
A possible remedy is to define a bounded and self-adjoint linear operator 
$\S^\delta:\hilb\to\CM$ and to consider the composition
$\QoI^\delta(\ipar) := \QoI(\S^\delta \ipar)$. One possibility is to
choose $\S^\delta:=(I+\delta\C^{-1/2})^{-1}$, in which case the 
smoothing is controlled by $\delta>0$.
The gradient and Hessian of $\QoI^\delta(\cdot)$ are now given 
by
\[
    \QoI^\delta_\ipar(\iparb) = \S^\delta\Grad(\S^\delta \iparb) \in \hilb, \quad 
    \QoI^\delta_{\ipar\ipar}(\ipar) = \S^\delta \Hess(S^\delta\iparb) \S^\delta
    \in \mathcal{L}(\hilb).
\]
This way, one might consider the local quadratic approximation
\[
\QoI^\delta_\text{quad}(\ipar) = \QoI^\delta(\iparb) +
\langle{\QoI^\delta_{\ipar}(\iparb)},{\ipar - \iparb}\rangle 
+ \frac12 \langle{\QoI^\delta_{\ipar\ipar}(\iparb)(\ipar - \iparb)},{\ipar-\iparb}\rangle, 
\quad \ipar \in \hilb.
\]
This construction allows to consider control objectives that are
defined %
only on $\CM$ to be extended to $\hilb$ via the mapping $\QoI^\delta$. Then, the Hilbert space formulation 
of the risk-averse OUU with quadratic approximations, developed in earlier sections, can be applied.

Another case where the Hilbert space theory needs extension is
when $\QoI$ is defined on a Banach
subspace $\mathscr{X}$ of full measure, and thus its gradient and
Hessian belong to 
$\mathscr{X}^*$ and $\mathcal{L}(\mathscr{X}, \mathscr{X}^*)$, respectively.
For example, in the control problem governed by a linear elliptic PDE
with uncertain coefficient discussed in section~\ref{sec:app},
the space $\mathscr{X} = C(\D)$ plays such a role (it is
known~\cite{Stuart10,DashtiStuart15} that due to our choice of the covariance
operator $\C$, $\mu\big(\mathscr{X}\big) = 1$). In
this case, we show that the expressions for the mean and variance of the
quadratic approximation continue---under appropriate assumptions---to
be well-defined.

Now, the gradient $\Grad(\iparb) \in \mathscr{X}^* \subset \CM^*$, and
thus $\ip{\Grad(\iparb)}{\C[\Grad(\iparb)]}$  
can be interpreted as a duality product, i.e., the
linear action of $\Grad(\iparb) \in \CM^*$ on $\C[\Grad(\iparb)] \in
\CM$. Here, we have used that for the covariance operator 
$\C$ defined above, we have $\C^{1/2}:\CM^*
\to \CM$.

Next, considering the expressions \eqref{eq:EQoI} and \eqref{eq:VQoI}
for mean and variance,
it remains to specify conditions that ensure that the operator $\C^{1/2} \Hess(\iparb) \C^{1/2}$ is
trace-class on $\hilb$. %
\begin{proposition} 
Let $\C$ be the covariance operator as defined in section~\ref{sec:measures}. 
Assume that $\Hess(\iparb) \in \mathcal{L}(\mathscr{X}, \mathscr{X}^*)$ restricted to $\CM$ is a bounded linear operator on 
$(\CM, \norm{\cdot}_\CM)$.
Then, the operator $\C^{1/2} \Hess(\iparb) \C^{1/2}$ is a trace class operator on $\hilb$. 
\end{proposition}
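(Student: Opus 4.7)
The plan is to realize $\C^{1/2}\Hess(\iparb)\C^{1/2}$ as the product of two Hilbert--Schmidt operators on $\hilb$, since the product of two Hilbert--Schmidt operators on a Hilbert space is trace class.

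Two Hilbert--Schmidt identifications will carry the argument. Let $\{e_i\}$ denote an orthonormal eigenbasis of $\C$ with eigenvalues $\lambda_i>0$. Trace-class of $\C$ gives $\sum_i \lambda_i =\trace(\C)<\infty$, so
\[
\|\C^{1/2}\|_{\text{HS}}^2 \;=\; \sum_i \|\C^{1/2}e_i\|^2 \;=\; \sum_i \lambda_i \;<\;\infty,
\]
and hence $\C^{1/2}\colon\hilb\to\hilb$ is Hilbert--Schmidt. Moreover, $\{\sqrt{\lambda_i}\,e_i\}_i$ is an orthonormal basis of $(\CM,\norm{\cdot}_\CM)$, so the continuous inclusion $\iota\colon\CM\hookrightarrow\hilb$ satisfies $\|\iota\|_{\text{HS}}^2 = \sum_i \|\sqrt{\lambda_i}\,e_i\|^2 = \sum_i \lambda_i <\infty$, and is therefore Hilbert--Schmidt as well.

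With these in hand I would write
\[
\C^{1/2}\Hess(\iparb)\C^{1/2} \;=\; \C^{1/2}\circ\bigl(\iota\circ\Hess(\iparb)|_\CM\circ\C^{1/2}\bigr),
\]
reading the inner parenthesis as the composition $\hilb\xrightarrow{\C^{1/2}}\CM\xrightarrow{\Hess|_\CM}\CM\xrightarrow{\iota}\hilb$. Here $\C^{1/2}\colon\hilb\to\CM$ is the isometric isomorphism between $\hilb$ and the Cameron--Martin space, and $\Hess(\iparb)|_\CM$ is bounded on $\CM$ by hypothesis, so the map $\Hess(\iparb)|_\CM\circ\C^{1/2}$ is bounded $\hilb\to\CM$. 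Post-composing with the Hilbert--Schmidt inclusion $\iota$ keeps it Hilbert--Schmidt (the composition of a bounded operator with a Hilbert--Schmidt operator is Hilbert--Schmidt). Pre-multiplying by the Hilbert--Schmidt $\C^{1/2}\colon\hilb\to\hilb$ then yields a product of two Hilbert--Schmidt operators, hence a trace-class operator on $\hilb$.

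The main subtlety I expect to confront is verifying that the displayed factorization is a genuine operator identity and not a formal manipulation. Since $\Hess(\iparb)$ is given only as an element of $\mathcal{L}(\mathscr{X},\mathscr{X}^*)$, the expression $\C^{1/2}\Hess(\iparb)\C^{1/2}$ could a priori require the leftmost $\C^{1/2}$ to be read as the duality extension $\CM^*\to\hilb$. The hypothesis that $\Hess(\iparb)|_\CM$ is a bounded operator \emph{on} $\CM$---so that its image lies in $\CM$ itself and not merely in $\CM^*$---is exactly what forces $\Hess(\iparb)\C^{1/2}u\in\CM\subset\hilb$, so the outer $\C^{1/2}$ can legitimately be taken to be the ordinary Hilbert--Schmidt operator on $\hilb$ and the factorization above holds as an equality of bounded operators on $\hilb$.
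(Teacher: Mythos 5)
Your proof is correct, but it takes a genuinely different route from the paper's. The paper argues entry-by-entry on the diagonal: writing $\{e_j\}$ for the eigenbasis of $\C$ with eigenvalues $\{\lambda_j\}$, it rewrites each diagonal term as a Cameron--Martin inner product, $\ip{e_j}{\C^{1/2}\Hess(\iparb)\C^{1/2}e_j} = \cip{\C^{1/2}e_j}{\C\Hess(\iparb)\C^{1/2}e_j}$, bounds it by $\norm{\Hess(\iparb)}\,\lambda_j$ via Cauchy--Schwarz in $\CM$ together with the boundedness of $\Hess(\iparb)$ on $\CM$, and sums to get $\norm{\Hess(\iparb)}\trace(\C)<\infty$. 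You instead give a structural factorization: $\C^{1/2}$ is Hilbert--Schmidt on $\hilb$ because $\trace(\C)<\infty$, the embedding $\CM\hookrightarrow\hilb$ is Hilbert--Schmidt for the same reason, $\C^{1/2}:\hilb\to\CM$ is an isometric isomorphism, and therefore $\C^{1/2}\Hess(\iparb)\C^{1/2}$ is a composition of two Hilbert--Schmidt maps, hence trace class. Both arguments consume exactly the same hypotheses, but your version buys two things. First, robustness: a finite diagonal sum in a \emph{single} orthonormal basis certifies trace class only for operators known to be positive (a self-adjoint operator can have vanishing diagonal in some basis while its eigenvalues fail to be absolutely summable, e.g.\ a block-diagonal operator with off-diagonal $2\times2$ blocks), and $\C^{1/2}\Hess(\iparb)\C^{1/2}$ need not be positive since $\Hess$ is a Hessian of a generally nonconvex map; the product-of-Hilbert--Schmidt fact is indifferent to sign and symmetry, so your argument closes a subtlety the paper's leaves implicit. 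Second, it yields the quantitative trace-norm bound $\norm{\C^{1/2}\Hess(\iparb)\C^{1/2}}_{\mathrm{tr}} \le \norm{\Hess(\iparb)}_{\mathcal{L}(\CM)}\trace(\C)$, the same constant the paper's estimate produces. Your closing paragraph, explaining that the hypothesis $\Hess(\iparb)(\CM)\subset\CM$ is what lets the outer $\C^{1/2}$ be read as the ordinary operator on $\hilb$ rather than a duality extension $\CM^*\to\hilb$, also makes precise an identification that the paper uses without comment.
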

\begin{proof}
It is straightforward to see that $\C^{1/2} \Hess(\iparb) \C^{1/2} \in \mathcal{L}(\hilb)$.
It remains to show that $\C^{1/2} \Hess(\iparb) \C^{1/2}$ is trace-class.
Let $\{ e_j \}_{j = 1}^\infty$ be the complete orthonormal set of eigenvectors of $\C$, with corresponding
(positive) eigenvalues $\{\lambda_j\}_{j = 1}^\infty$. 
We note that for $u, v \in \hilb$, $\ip{u}{v} = \cip{\C^{1/2}u}{\C^{1/2} v}$. Therefore, 
for each $j \geq 1$, we can write 
\begin{multline*}
    \ip{e_j}{\C^{1/2} \Hess(\iparb) \C^{1/2} e_j} 
    = \cip{\C^{1/2} e_j}{\C \Hess(\iparb) \C^{1/2} e_j}
    \leq \norm{\Hess(\iparb)} \|{\C^{1/2}\C  e_j}\|_\CM \|{\C^{1/2} e_j}\|_\CM
    \\
    = \norm{\Hess(\iparb)}  \norm{\C  e_j}\norm{e_j}
= \norm{\Hess(\iparb)}\lambda_j.
\end{multline*}
Therefore, $\sum_j \ip{e_j}{\C^{1/2} \Hess(\iparb) \C^{1/2} e_j} \leq \norm{\Hess(\iparb)} \sum_j \lambda_j < \infty$.
\end{proof}

\section{Control of a semilinear elliptic PDE with uncertain Neumann boundary data} \label{sec:semilinear}
We first illustrate our approach for
the optimal control of a semilinear elliptic PDE with uncertain
Neumann boundary data and a right hand side control. In this problem,
the nonlinearity in the governing PDE is the sole reason why the
quadratic approximation $\QoI_\mathup{quad}(\cdot)$ of the objective is not
exact. Below, we present and discuss the optimization formulation for this
PDE-constrained OUU problem.

We assume a bounded domain $\D\subset
\mathbb R^2$ with boundary split into disjoint parts $\GN$ and $\GD$,
and we consider the semilinear elliptic equation
\begin{equation}\label{eq:semistate}
  \begin{aligned}
  -\Delta u + cu^3 &= \ctrl &\quad\text{ in }& \D,\\
  u &= 0 &\quad\text{ on }& \GD,\\
  \nabla u\cdot\vec{n} &=\ipar &\quad\text{ on }& \GN.
  \end{aligned}
\end{equation}
Here, $c\ge 0$, $\ctrl\in L_2(\D)$ is the control and the uncertain
parameter is $\ipar\in L^2(\GN)$, distributed according to the law
$\mu = \GM{\iparb}{\C}$ with mean $\iparb$ and covariance operator
$\C$. Due to the monotonicity of the nonlinear term, the state
equation \eqref{eq:semistate} has a unique solution for every Neumann
data $\ipar$ and every $\ctrl\in L_2(\D)$,  \cite{Reyes15}.

We consider a control objective of tracking type as follows:
\begin{equation}\label{eq:semiQoI}
\QoI(\ctrl,\ipar) = \frac 12 \| u - u_d\|^2, %
\end{equation}
where $u_d\in L^2(\D)$ is a given desired state. 
It is straightforward 
to show that the solution $u$ of~\eqref{eq:semistate} satisfies the estimate 
$\norm{u}_{H^1(\D)} \leq K  (\norm{z}_{L^2(\D)} + \norm{m}_{L^2(\GN)})$,
for a constant $K = K(\D)$.
Hence, since $\ipar$ has moments of all orders, it follows that the state variable $u = u(\ctrl,\ipar)$ 
also has moments of all orders for every $\ctrl \in L^2(\D)$. This in particular implies the existence
of the first and second moments of the control objective $\QoI(\ctrl, \ipar)$ for every $\ctrl$. 
Therefore, a mean-variance risk-averse OUU objective function is well-defined. 

To derive the
quadratic approximation of \eqref{eq:semiQoI} with respect to $\ipar$
at the mean $\iparb$,
we compute, for fixed control $\ctrl$, the gradient and Hessian of
$\QoI$ with respect to $\ipar$.
The gradient of $\QoI$ with respect to $\ipar$ is
$\QoI_\ipar(\iparb)=-\bar p_{|\GN}$, where $u$ satisfies
\eqref{eq:semistate} with $\ipar=\iparb$ (the corresponding state is
denoted by $\bar u$), and $\bar p$ satisfies the adjoint equation
\begin{equation}\label{eq:adj}
  \begin{aligned}
  -\Delta \bar p + 3c\bar u^2\bar p &= -(\bar u-u_d) &\quad\text{ in }& \D,\\
  \bar p &= 0 &\quad\text{ on }& \GD,\\
  \nabla \bar p\cdot\vec{n} &= 0 &\quad\text{ on }& \GN.
  \end{aligned}
\end{equation}
The second derivative at $\iparb$ evaluated in a direction $\hat
\ipar$ is given by $\QoI_{\ipar\ipar}(\iparb)(\hat \ipar)=-\hat
p_{|\GN}$, where $\hat p$ solves the incremental adjoint equation:
\begin{equation}\label{eq:incadj}
  \begin{aligned}
  -\Delta \hat p + 3c\bar u^2\hat p  &= -(6c\bar u \bar p +1) \hat u &\quad\text{ in }& \D,\\
  \hat p &= 0 &\quad\text{ on }& \GD,\\
  \nabla \hat p\cdot\vec{n} &=0 &\quad\text{ on }& \GN,
  \end{aligned}
\end{equation}
and $\hat u$ the incremental state equation:
\begin{equation}\label{eq:incstate}
  \begin{aligned}
  -\Delta \hat u + 3c\bar u^2\hat u &= 0 &\quad\text{ in }& \D,\\
  \hat u &= 0 &\quad\text{ on }& \GD,\\
  \nabla \hat u\cdot\vec{n} &= \hat\ipar &\quad\text{ on }& \GN.
  \end{aligned}
\end{equation}
We now show that $\Hess(\iparb):\ipar\mapsto -\hat p_{|\GN}$ is
bounded as mapping from $L_2(\GN)\to L_2(\GN)$. From
\eqref{eq:incstate} it follows that $\|\hat u\|_{H^1(\D)}\le
\|\hat m\|$. To estimate the $H^{-1}(\D)$-norm of the right hand
side in \eqref{eq:incadj}, we consider an arbitrary $v\in H^1(\D)$.
Using H\"older's inequality and the continuous embedding
of $H^1(\D)$ in $L^4(\D)$, we obtain
\begin{align*}
\int_\D (6c\bar u\bar p+1) \hat u v \, dx &\le \|6c\bar u\bar
p+1\| \|\hat u v \| \le \left(c_1+c_2\|\bar u\|_{L^4(\D)}\|\bar
p\|_{L^4(\D)}\right) \|\hat
u\|_{L^4(\D)}\|v\|_{L^4(\D)}\\
&\le c_3\|\hat u \|_{H^1(\D)}
\|v \|_{H^1(\D)},
\end{align*}
where the constants $c_1, c_2, c_3$ do not depend on $\hat u$ or $v$.
This shows that the $H^{-1}(\D)$-norm of the right hand side in
\eqref{eq:incstate} is bounded by $\|\hat u\|_{H^1(\D)}$. Hence,
with constants $c_4,c_5,c_6$:
\begin{equation*}
\|\hat p_{|\GN} \|_{L^2(\GN)}\le c_4\|\hat p\|_{H^1(\D)}\le c_5\|\hat
u\|_{H^1(\D)}\le c_6\|\hat m\|_{L^2(\GN)},
\end{equation*}
which proves the boundedness of $\Hess$.
Thus, the OUU objective function~\eqref{equ:ouu-prob-general}, 
specialized to the present example, 
with the gradient and Hessian operators $\Grad(\iparb)$ and $\Hess(\iparb)$ 
defined above is well-defined and conforms to the theory 
outlined in sections~\ref{sec:moments_of_quad_approx}--\ref{sec:ouu_objective}.

\section{Control of an elliptic PDE with uncertain coefficient}
\label{sec:app}
Motivated by problems involving the optimal control of flows in porous
media, we consider the optimal control of a linear elliptic PDE with
uncertain coefficient field.  We discuss this application
numerically in section~\ref{sec:numerics}, where we consider control
of fluid injection into the subsurface at injection wells. In this section,
we describe the control objective and the PDE-constrained objective
function for the risk-averse OUU problem, and derive the adjoint-based
expressions for the gradient of the OUU objective function.
This is an example for a problem in which the
parameter-to-objective map is defined only on a Banach 
subspace of $\hilb$ that has full measure.  We thus use the expressions for
the mean and variance of the quadratic approximation, developed in a
Hilbert space setting in section~\ref{sec:analytic}, formally, to
define the objective function for the risk-averse optimal control
problem.

We begin by describing the state (forward) equation. 
On an open bounded and sufficiently smooth domain $\D\subset\R^n$,
$n\in\{1,2,3\}$ with boundary $\partial\D$, we consider the following
elliptic partial differential equation:
  \begin{equation}\label{equ:poi}
    \begin{aligned}
      -\nabla \cdot (\Exp{m} \nabla u) &= b + F\ctrl  &&\text{ in }\D, \\
                                        u  &= g  &&\text{ on } \GD, \\
           \Exp{m} \nabla u \cdot \vec{n} &= 0  &&\text{ on } \GN.
    \end{aligned}
  \end{equation}
Here, the boundary is split into disjoint parts $\GD$ and $\GN$ on
which we impose Dirichlet and Neumann boundary conditions, respectively. The
Dirichlet data is $g\in H^{1/2}(\GD)$, $\vec{n}$ denotes the
unit-length outward normal for the boundary $\partial\D$, and for simplicity we
have considered homogeneous Neumann conditions.
We assume that the right hand side is specified as 
$Fz + b$, where $F:L^2(\D) \to L^2(\D)$ is a bounded linear transformation
and $b \in L^2(\D)$ is a distributed source per unit volume.

We consider the weak form of \eqref{equ:poi}, i.e., we seek solutions
$u\in \Vg := \{ v \in H^1(\D) : \restr{v}{\GD} = g\}$ that satisfy
\begin{equation}\label{equ:poiweak}
\ip{\Exp{m} \nabla u}{\nabla v} - \ip{b + F\ctrl}{v} = 0
\quad\text{ for all } v\in 
\V :=  \{ v \in H^1(\D) : \restr{v}{\GD} = 0\}.
\end{equation}
We consider the case where the log-permeability, $\ipar$, is uncertain and is
modeled as a spatially distributed random field; see also \cite{DashtiStuart15,BonizzoniNobile14}. The realizations of $\ipar$
belong to the Hilbert space $\hilb = L^2(\D)$, and we assume $\ipar$ is
distributed according to a Gaussian with covariance operator $\C$ and mean
$\iparb\in \CM \subset \hilb$, where $\CM$ is the
Cameron--Martin space associated with the Gaussian measure $\mu$.  
With our choice of the covariance operator, $\mu(\mathscr{X}) = 1$ with $\mathscr{X}
= C(\bar{\D}) \subset \hilb$.

We consider the following tracking-type control objective $\QoI(\ctrl,\ipar)$:
\begin{equation}\label{equ:controlobj}
   \QoI(\ctrl,\ipar) =\frac12 \euclidnorm{ \Q u(\ctrl,\ipar) - \obsq}^2,
\end{equation}
where $u = u(\ctrl, \ipar)$ solves the weak form of the state
equation~\eqref{equ:poiweak}, $\Q:L^2(\D)\to\R^q$ is a bounded linear operator, and $\obsq\in \R^q$ is given.
Notice that this control objective is defined for every $\ipar \in
\mathscr{X}$, i.e., almost surely. It is also possible to prove
boundedness of moments of $\norm{ u }_\mathscr{X}$, which, in particular,
ensures that a mean-variance risk-averse OUU objective based on \eqref{equ:controlobj} is well-defined. Compared to the problem discussed in
section \ref{sec:semilinear}, here the proof of
boundedness of moments of $\norm{ 
u }_\mathscr{X}$ is more involved (see \cite[Example
2.15]{DashtiStuart15}) and requires the use of Fernique's
theorem~\cite{Fernique70}.

For fixed parameter $\ipar$ and control $\ctrl$, the gradient of the
parameter-to-objective map can be computed with a standard
variational calculus approach (see, e.g.,~\cite{BorziSchulz12}). 
In particular, at $\ipar = \iparb$, the gradient $\Grad(\iparb) \in \mathscr{X}^*$ is given by 
\newcommand{\incu}{\upupsilon}
\newcommand{\incp}{\uprho}
\begin{equation}\label{equ:grad_inner}
\Grad(\iparb) = \!{\Exp{\iparb}\nabla u}\cdot{\nabla p},
\end{equation}
where $u$ is the solution to the state equation, and $p$ solves
the adjoint equation, i.e., $p \in \V$ 
and satisfies
\begin{equation}\label{equ:adj}
  \ip{\Exp{\iparb} \nabla p}{\nabla \ut{p}} = -\ip{\Q^*(\Q u - \obsq)}{\ut{p}}, 
  \quad \text{for all } \ut{p} \in \V.
\end{equation}
Similarly, the action of the Hessian, $\Hess(\iparb)$ in
a direction $\zeta \in \hilb$ can be expressed as
\begin{equation}\label{equ:hess_inner}
   \Hess(\iparb) \zeta \\
     = \Exp{\iparb} (\zeta \nabla u \cdot \nabla p + \nabla \incu \cdot \nabla p + \nabla u \cdot \nabla\incp),
\end{equation}
where $u$ and $p$ are the state and adjoint variables computed with a
given control $\ctrl$ 
at $\ipar = \iparb$. The variables $\incu$ and $\incp$, 
which we refer to as the \emph{incremental} state and adjoint variables, are
obtained by solving the following incremental state and adjoint equations:
\begin{subequations}\label{equ:inc}
\begin{align}
    \ip{\Exp{\iparb} \nabla \incu}{\nabla \ut{\incu}} &= -\ip{\zeta\Exp{\iparb} \nabla u}{\nabla \ut{\incu}} 
    \quad \text{for all } \ut{\incu} \in \V, \\
    \ip{\Exp{\iparb} \nabla \incp}{\nabla \ut{\incp}} &= -\ip{\Q^*\Q \incu}{\ut\incp} - \ip{\zeta\Exp{\iparb} \nabla p}{\nabla \ut{\incp}} 
    \quad \text{for all } \ut{\incp} \in \V.  
\end{align}
\end{subequations}
Note that \eqref{equ:grad_inner} and
\eqref{equ:hess_inner} for $\Grad$ and $\Hess$ require that the state
and adjoint equations \eqref{equ:poi} and \eqref{equ:adj}, as well as
their incremental variants \eqref{equ:inc}, are satisfied.  Thus, in
the formulation of the OUU objective function
\eqref{equ:ouu-objective-randomized1},
these equations must be enforced as constraints.

\newcommand{\nuexpd}{\C^{1/2}\left[ \Exp{\iparb} (\zeta \nabla u \cdot \nabla p + \nabla \incu \cdot \nabla p + \nabla u \cdot \nabla\incp)\right]}
\newcommand{\nuexpdj}{\C^{1/2}\left[ \Exp{\iparb} (\zeta_j \nabla u \cdot \nabla p + \nabla \incu_j \cdot \nabla p + \nabla u \cdot \nabla\incp_j)\right]}
\newcommand{\gexpd}{ \Exp{\iparb}\nabla u \cdot\nabla p}
\newcommand{\psiexpdn} {\Exp{\iparb} (\zeta \nabla u \cdot \nabla p + \nabla \incu \cdot \nabla p + \nabla u \cdot \nabla\incp)}
\newcommand{\psiexpdnj} {\Exp{\iparb} (\zeta_j \nabla u \cdot \nabla p + \nabla \incu_j \cdot \nabla p + \nabla u \cdot \nabla\incp_j)}
\newcommand{\psiexpdnk} {\Exp{\iparb} (\zeta_k \nabla u \cdot \nabla p + \nabla \incu_k \cdot \nabla p + \nabla u \cdot \nabla\incp_k)}
\newcommand{\expduj}{\Exp{\iparb} (\zeta_j \nabla \ut{u} \cdot \nabla p + \nabla \ut{u} \cdot \nabla\incp_j)}
\newcommand{\expdpj}{\Exp{\iparb} (\zeta_j \nabla u \cdot \nabla \ut{p} + \nabla \incu_j \cdot \nabla \ut{p})}

\subsection{The OUU problem for \eqref{equ:poi}}
\label{subsec:ouupb}

Here we summarize the formulation of the OUU problem, which uses a quadratic
approximation of $\QoI$ defined in \eqref{equ:controlobj}:
\begin{subequations}\label{equ:OUUpoi}
  \begin{equation}\label{equ:ouu_cost_quad}
\begin{aligned}
\min_{\ctrl \in Z} \J(\ctrl) &:= \frac12 \euclidnorm{ \Q u - \obsq}^2
+ \frac1{2\Ntr} \sum_{j = 1}^\Ntr \ip{\zeta_j}{\psi_j}\\
&+ \frac\beta2\Big\{\ip{\Grad(\iparb)}{\C[\Grad(\iparb)]}
+ \frac1{2\Ntr} \sum_{j = 1}^\Ntr \ip{\psi_j}{\C \psi_j}
\Big\}
+ \frac\gamma2 \norm{\ctrl}^2,
\end{aligned}
  \end{equation}
where for $j = 1, \ldots, \Ntr$
\begin{align}
\Grad(\iparb) &= \!{\Exp{\iparb}\nabla u}\cdot{\nabla p},\label{equ:risk-averse-ouu-g} \\
\quad \psi_j &= \Exp{\iparb} (\zeta_j \nabla u \cdot \nabla p + \nabla \incu_j \cdot \nabla p + \nabla u \cdot \nabla\incp_j), \label{equ:risk-averse-ouu-Hzeta}
\end{align}
and the variables $(u, p, \{\incu_j\}, \{\incp_j\})\in \V^2 \times (\V^\Ntr)^2$, 
which can be considered the state variables of the OUU problem, solve
\begin{align}
    \ip{\Exp{\iparb} \nabla u}{\nabla \ut{u}} &= \ip{b + F\ctrl}{\ut{u}} &\forall \ut{u} \in \V,  \label{equ:risk-averse-ouu-fwd}\\
    \ip{\Exp{\iparb} \nabla p}{\nabla \ut{p}} &= -\ip{\Q^*(\Q u - \obsq)}{\ut{p}} &\forall \ut{p} \in \V,  \label{equ:risk-averse-ouu-adj}\\
    \ip{\Exp{\iparb} \nabla \incu_j}{\ut{\incu}} &= -\ip{\zeta_j \Exp{\iparb} \nabla u}{\nabla \ut{\incu}} &\forall{\ut\incu \in \V},
    \label{equ:risk-averse-ouu-inc-fwd}\\
    \ip{\Exp{\iparb} \nabla \incp_j}{\ut\incp} &= -\ip{Q^*Q \incu_j}{\ut\incp} - \ip{\zeta_j \Exp{\iparb} \nabla p}{\ut\incp}
    &\forall \incp \in \V. 
    \label{equ:risk-averse-ouu-inc-adj}
\end{align}

\end{subequations}

\subsection{Evaluation and gradient computation of the OUU objective function}
\label{subsec:evalgrad}

Solving the PDE-constrained optimization problem~\eqref{equ:OUUpoi}
efficiently requires gradient-based optimization methods. To compute
the gradient of $\J$
with respect to $\ctrl$, we follow a Lagrangian approach, and employ
adjoint variables (i.e., Lagrange multipliers) to enforce the PDE
constraints~\eqref{equ:risk-averse-ouu-fwd}--\eqref{equ:risk-averse-ouu-inc-adj}.
The details of the derivation of the gradient are relegated to
appendix~\ref{appnd:grad}. 
The 
expression for the gradient, in a direction $\ut z$, takes the form:
\begin{equation}\label{equ:ouu-reduced-gradient}
   \G(z)\ut{z} = \gamma \ip{z}{\ut{z}} - \ip{F\ut{z}}{\ad{u}}, \quad
   \ut{z} \in L^2(\D),
\end{equation}
where $\ad{u}$ is obtained by solving the following system of
equations for the OUU
adjoint variables $(\ad{u}, \ad{p}, \{\ad\incu_j\}, \{\ad\incp_j\}) \in \V^2 \times (\V^\Ntr)^2$
\begin{subequations}\label{equ:raouu-adj-gen}
  \begin{align}
    \!\!\ip{\Exp{\iparb} \nabla \ad{\incp}_j}{\nabla \ut{\incp}}
      &\!=\!\ip{b_1^{(j)}}{\ut{\incp}},
    \label{equ:ouu-adj-incp-simp-gen}
    \\
    \!\!\ip{\Exp{\iparb} \nabla \ad{\incu}_j}{\nabla \ut{\incu}}
    + \ip{\Q^*\Q \ad\incp_j} {\ut{\incu}}
    &\!=\!\ip{b_2^{(j)}}{\ut{\incu}},
    \label{equ:ouu-adj-incu-simp-gen}
    \\
    \!\!\!\ip{\Exp{\iparb} \nabla \ad{p}}{\nabla \ut{p}}
    + \sum_{j = 1}^\Ntr \ip{\zeta_j\Exp{\iparb} \nabla \ad{\incp}_j}{\nabla \ut{p}}
    &\!=\!\ip{b_3}{\ut p},
    \label{equ:ouu-adj-p-simp-gen}
    \\
    \!\! \ip{\Exp{\iparb} \nabla \ad{u}}{\nabla \ut{u}}
    + \ip{\Q^*\Q \ad{p}}{\ut{u}}
    + \sum_{j = 1}^\Ntr \ip{\zeta_j\Exp{\iparb} \nabla \ad\incu_j}{\nabla \ut{u}}
    &\!=\! \ip{b_4}{\ut u},
    \label{equ:ouu-adj-u-simp-gen}
  \end{align}
\end{subequations}
for all $(\ut u, \ut p, \ut{\incu}, \ut{\incp}) \in \V^4$,
with $\{b_1^{(j)}\}_{j=1}^\Ntr ,\{ b_2^{(j)}\}_{j=1}^\Ntr, b_3$, and $b_4$ given by

\begin{align*}
    \ip{b_1^{(j)}}{\ut{\incp}} &=
       -\biggl[\frac1{2\Ntr}\ip{\zeta_j}{\Exp{\iparb} \nabla u \cdot \nabla \ut{\incp}}
       + \frac\beta{2\Ntr} \ip{\Exp{\iparb} \nabla u \cdot \nabla \ut{\incp} }{\C\psi_j }\biggr],
    \\
    \ip{b_2^{(j)}}{\ut{\incu}} &=
      -\biggl[\frac1{2\Ntr} \ip{\zeta_j}{\Exp{\iparb} \nabla \ut{\incu} \cdot \nabla p}
      +\frac\beta{2\Ntr}\ip{\Exp{\iparb}\nabla \ut{\incu} \cdot \nabla
        p}{\C\psi_j }\biggr], %
    \\
    \ip{b_3}{\ut p} &=
    -\biggl[\frac1{2\Ntr} \sum_{j = 1}^\Ntr \ip{\zeta_j}{\expdpj}
     + \beta \ip{\Exp{\iparb}\nabla u\cdot \nabla \ut{p}}{\C[ \Grad(\iparb)]}\notag\\
    &+ \frac\beta{2\Ntr} \sum_{j = 1}^\Ntr \ip{\Exp{\iparb} (\zeta_j \nabla u \cdot \nabla \ut{p}
     + \nabla \incu_j \cdot \nabla \ut{p})}{ \C\psi_j}\biggr], %
    \\
    \ip{b_4}{\ut u} &=
    -\biggl[
     \ip{\Q^* (\Q u - \obsq)}{\ut{u}}
     +  \frac1{2\Ntr}  \sum_{j = 1}^\Ntr \ip{\zeta_j}{\expduj}
     +  \beta \ip{\Exp{\iparb}\nabla \ut{u}\cdot \nabla p}{\C[ \Grad(\iparb)]}\notag\\
     &+ \frac\beta{2\Ntr}  \sum_{j = 1}^\Ntr \ip{\Exp{\iparb} (\zeta_j \nabla \ut{u} \cdot \nabla p
      + \nabla \ut{u} \cdot \nabla \incp_j)}{ \C\psi_j}
     \biggr]. %
\end{align*}
Next, we count the number of PDE solves required for the evaluation of
the objective function \eqref{equ:ouu_cost_quad} and of its gradient
\eqref{equ:ouu-reduced-gradient}. Although in the present example, the
application of $\C$ requires PDE solves that are similar to those in
the state and adjoint equations, we do not include them in our
counting since (1) the matrix in $\C$ does not change and thus a
sparse factorization or multigrid solver can be set up upfront, and (2)
we aim at problems where the state (and thus the adjoint) equation is
more complex than in the present case and thus its solution dominates
the application of $\C$.

Hence, the number of forward-like PDE solves (i.e., the state equations, the
adjoint equation, and incremental equations) required for evaluating the
objective function \eqref{equ:ouu_cost_quad} is $2 + 2 \times \Ntr$. The
gradient evaluation requires another $2 + 2 \times \Ntr$ PDE
solves. Thus, $4 + 4\times \Ntr$ PDE solves are necessary for
evaluating the OUU objective
function and its gradient. If a small $\Ntr$ provides an approximation of
the trace that is suitable for computing an optimal control, each iteration of
a gradient-based method requires a moderate, fixed number
of PDE solves.
We
demonstrate this in our numerical experiments, where for a typical OUU
problem with a discretized parameter dimension of about $3{,}000$, $O(10)$ PDE
solves are required for approximating the OUU objective and its gradient.
This modest computational cost should be contrasted with methods that
approximate the OUU objective using sampling or quadrature in the
parameter space.  While these approaches can, asymptotically, solve
the original OUU problem rather than the formulation based on the
quadratic expansion of $\QoI$, their cost in terms of PDE solves can
be much higher. Moreover, if a factorization-based direct solver can
be used for the governing PDE problems, these factorizations can be
reused multiple times in the quadratic approximation approach. This is
the case since the PDE operators arising in each quadratic
approximation correspond to the same parameter. Thus, reusing
factorizations can save significant computation time. Since each
sample in the Monte Carlo approach corresponds to a different
parameter, matrix factorizations must be recomputed for each sample.

The computational cost of sampling-based methods is exacerbated for
OUU problems governed by nonlinear forward PDEs; in such cases, while
our approach requires only one nonlinear PDE solve and
$\mathcal{O}(\Ntr)$ linear(ized) PDE solves (for OUU objective and
gradient evaluation), the required number of nonlinear PDE solves for
sampling approaches scales with the number of samples.

\section{Computational experiments}\label{sec:numerics}

Next, we numerically study our %
OUU approach applied to the control of an elliptic PDE
with an uncertain coefficient as discussed in section~\ref{sec:app}.  

\subsection{Problem setup}
\label{subsec:setup}
We consider a rectangular domain $\D = (0, 2) \times (0, 1)\subset\R^2$ for
\eqref{equ:poi}, impose Dirichlet boundary conditions on the left and right
sides of the domain according to $u(0, y) = 1$ and $u(2, y) = 0$ for $y \in [0,
1]$, and use homogeneous Neumann boundary conditions on the top and bottom
boundaries.  We use a finite element mesh with $3{,}081$ linear rectangular
elements and $3{,}200$ degrees of freedom to discretize the state and
adjoint variables, and the uncertain parameter field.  The discretized
uncertain parameters are the coefficients in the finite element expansion of
the parameter field.

\paragraph{The probability law of the uncertain parameter field}
The probability law of the uncertain parameter, which is the
log-permeability field $\ipar$, is given by a Gaussian measure $\mu =
\GM{\iparb}{\C}$.  The mean $\iparb$ and the locations of
production and control wells are shown in
Figure~\ref{fig:basic_setup} (upper left image).
The covariance operator $\C$ of $\mu$
is as in \eqref{eq:Cconcrete}, with
$\kappa = 2\times10^{-2}$ and $\alpha = 4$. These values are
chosen such that samples from $\mu$ have a desired magnitude and
correlation length. Typical realizations of the random process
$\ipar$ are also shown in
Figure~\ref{fig:basic_setup}.

\begin{figure}[t]\centering
\includegraphics[width=.442\textwidth]{./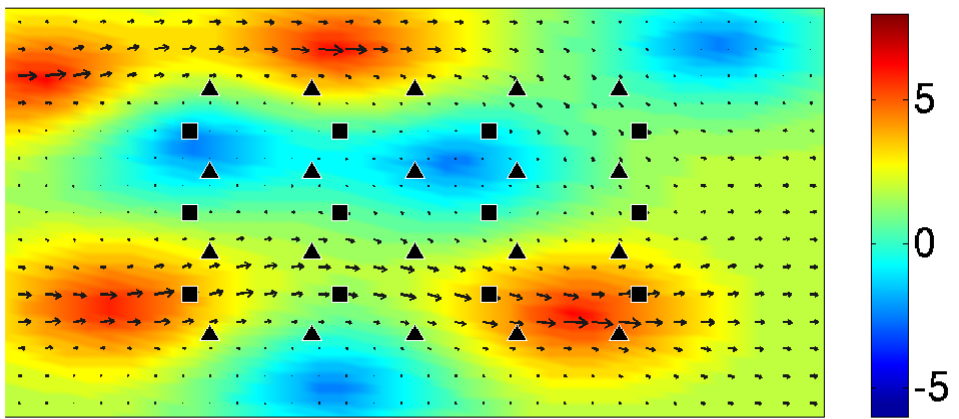}
\includegraphics[height=.21\textwidth,width=0.473\textwidth]{./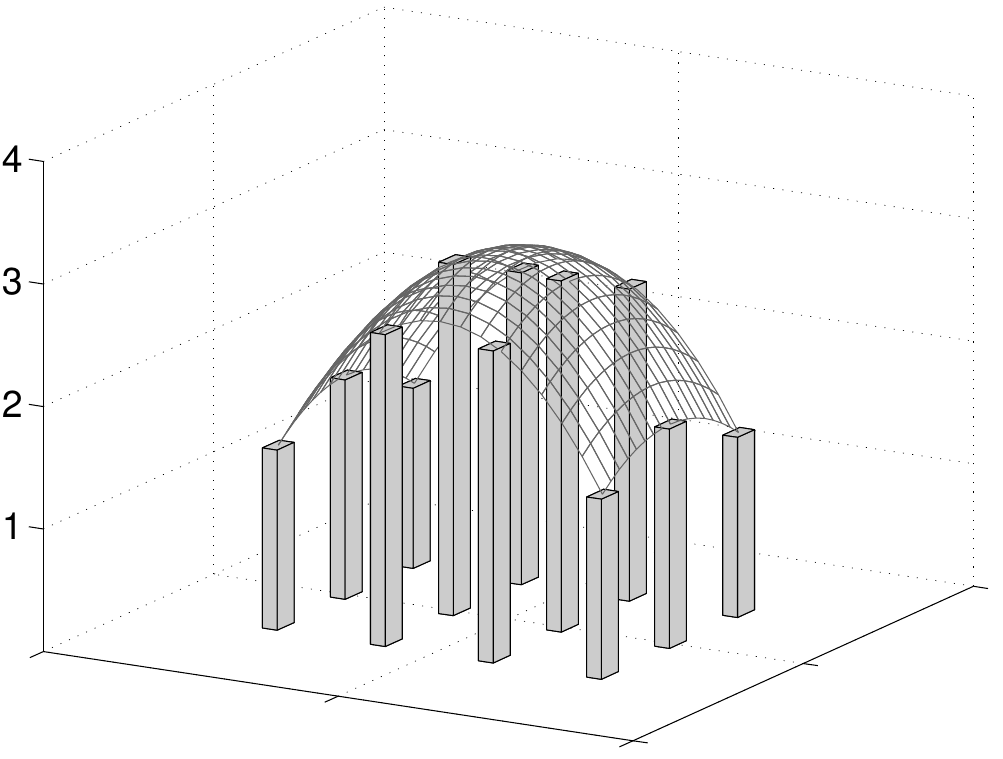}
\\
\vspace{2mm}
\includegraphics[width=.3\textwidth]{./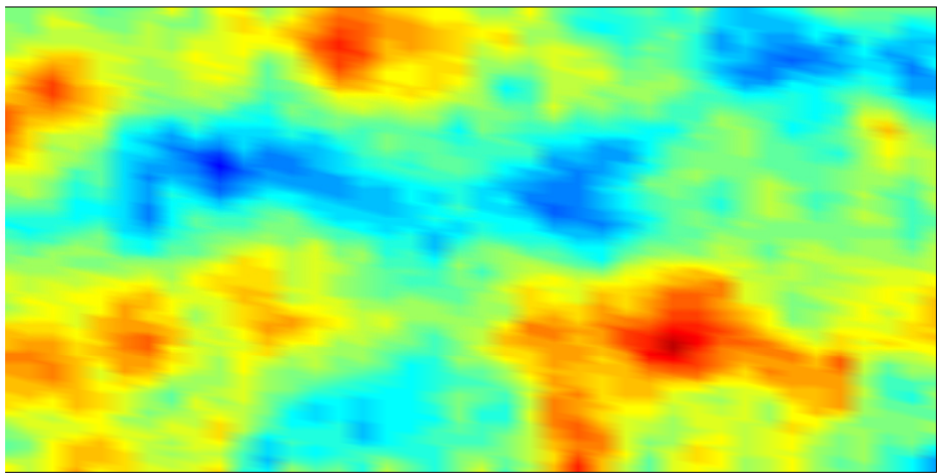}
\includegraphics[width=.3\textwidth]{./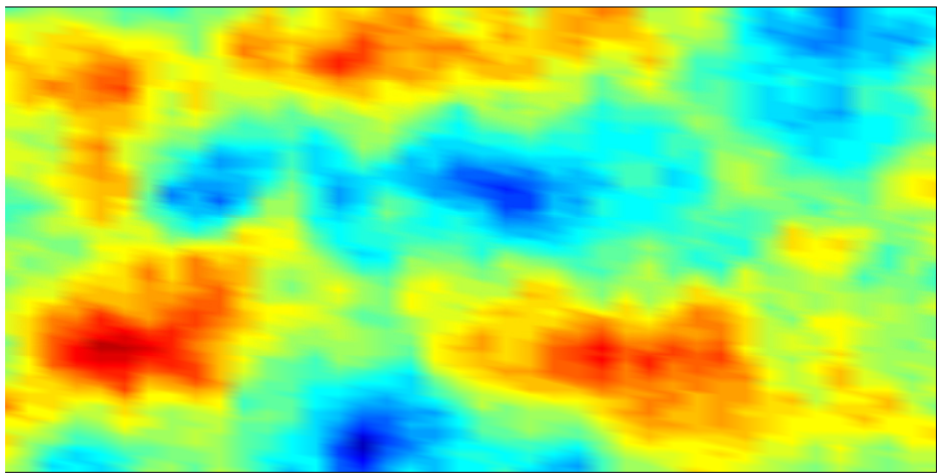}
\includegraphics[width=.3\textwidth]{./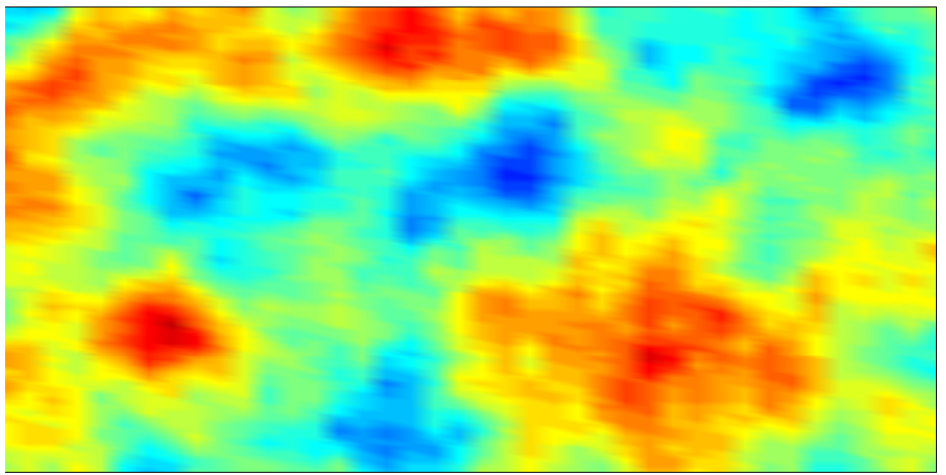}
\caption{Top row: Shown on the left is the mean log permeability field with corresponding
  Darcy flow driven by the lateral boundary conditions. Superimposed is
  the well configuration (triangles indicate the locations of
  injection/control wells, and squares of production wells). Shown on
  the right is the target pressure at the production wells, located at points
  $\{\vec{x}_i\}_{i = 1}^{12}$,  
  computed according to $\bar{q}_i = q(\vec{x}_i)$, where 
  $q(\vec{x}) = 3 - 4(x_1-1)^2-8(x_2-0.5)^2$.
Bottom row: Realizations of the uncertain log-permeability field.
}
\label{fig:basic_setup}
\end{figure}

\paragraph{The control}
For the right hand side in \eqref{equ:poi}, we consider $b \equiv 0$, and
define $F$ as a
weighted sum of finitely many mollified Dirac delta functions, which we
denote by $f_i$, $i = 1, \ldots, \Nc$:
\begin{equation}\label{eq:forcing}
    F\vec{\ctrl} = \sum_{i = 1}^\Nc z_i f_i.
\end{equation}
The weights $z_i$ in \eqref{eq:forcing} are the (finitely many) control
variables, summarized in the control vector $\vec{\ctrl} = (\ctrl_1,
\ldots, \ctrl_{\Nc})^T$.  The right hand side \eqref{eq:forcing} is motivated
by subsurface flow applications in petroleum engineering, where
fluid injection at injection wells is used to control the flow rates at production
wells. Here, $f_i=f_i(x,y)$ represent the locations of injection wells,
and $z_i$ are the injection rates at these wells. We
impose control bounds, i.e., the set of
admissible controls is
\[
   \Zad = \{ \vec{\ctrl} \in \R^\Nc : z_\text{min} \leq \ctrl_i \leq z_\text{max}, 1 \leq i \leq \Nc \},
\]
where $z_\text{min} = 0$ and $z_\text{max} = 16$. 
In the present example, we use $\Nc = 20$ control wells; see 
Figure~\ref{fig:basic_setup} (top left).
The control objective \eqref{equ:controlobj} is the squared $\ell^2$ difference
between the pressure at the production wells and a vector $\obsq$ of
target pressure values, which follows a parabolic profile as
depicted in Figure~\ref{fig:basic_setup} (top right).

\paragraph{Quadratic approximations in the small variance limit}
Here, we compare approximation error in the linear and quadratic
approximations of the parameter-to-objective map, at a fixed control $\vec{\ctrl}^0$ with $z^0_i = 4$ 
for $i = 1, \ldots, \Nc$,
in the sense discussed in section~\ref{sec:trucation_error}. In
Figure~\ref{fig:error_analysis}, we consider the expected value of the
truncation error of the first and second-order Taylor expansion to the
mapping $\ipar \mapsto \QoI(\vec{\ctrl}^0, \ipar)$. This illustrates the rate
of decay of the expected truncation error as indicated by the analysis in
section~\ref{sec:trucation_error}. In particular, the truncation error in a linear
expansion is $\O(\eps)$ and that of the quadratic approximation is
$\O(\eps^{3/2})$. In this study, we consider scaling the distribution law of
$\ipar$ given according to $\mu = \GM{\iparb}{\eps \C}$ with successively
smaller values of $\eps$.

\begin{figure}[t]\centering
\includegraphics[width=.45\textwidth]{./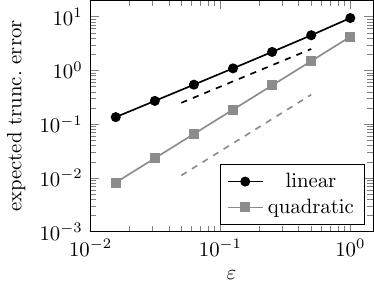}
\caption{The expected values
$\ave\{ |\QoI(\ctrl, \cdot) - \QoI_\mathup{lin}(\ctrl,\cdot)|\}$ (black
solid line) and 
$\ave\{ |\QoI(\ctrl, \cdot) - \QoI_\mathup{quad}(\ctrl,\cdot)|\}$
(gray solid line), with 
distribution law of $\ipar$ given by $\mu = \GM{\iparb}{\eps \C}$ as
$\eps \to 0$. The black and gray dashed lines indicate $\O(\eps)$ and
$\O(\eps^{3/2})$, respectively.
The errors are computed for $\eps_k = 1/2^k$, for $k = 0, \ldots, 6$, using
a fiexed Monte Carlo sample of size $10,000$ to approximate the average errors.  
}
\label{fig:error_analysis}
\end{figure}

\subsection{Results}
\label{label:numerics}
We solve the numerical optimization problem \eqref{equ:OUUpoi} via an
interior point method (to incorporate the box constraints for
$\vec{\ctrl}$) with BFGS Hessian approximation. For that purpose, we
employ MATLAB's 
\verb+fmincon+ routine, to which we provide functions for the
evaluation of the objective function \eqref{equ:OUUpoi} and for the
computation of its gradient with respect to the control, as derived in
the previous section.

We use a uniform control vector $\vec{\ctrl}^0=(4,\ldots,4)$ (see
Figure~\ref{fig:init_control} left) as the initial guess of the optimization
algorithm.  It is instructive to consider the statistical distribution of the
control objective (as defined in~\eqref{equ:controlobj}) for this
initialization, which is shown in the right image in
Figure~\ref{fig:init_control}.  We also depict the distributions
of the linear and quadratic approximations $\QoI_\mathup{lin}(\vec{\ctrl}^0, \ipar)$ and $\QoI_\mathup{quad}(\vec{\ctrl}^0,\ipar)$ as defined in~\eqref{equ:linearapprox}
and~\eqref{equ:quadapprox}, respectively.
For all figures, we have used kernel density estimation (KDE) to
approximate the probability density functions (PDFs) from 10,000
samples.  Note that compared to $\QoI_{\text{lin}}$, the distribution
of $\QoI_{\text{quad}}$ is a significantly better approximation for the
distribution of $\QoI$.
\begin{figure}[ht]\centering
\begin{tabular}{cc}
\includegraphics[width=.4\textwidth]{./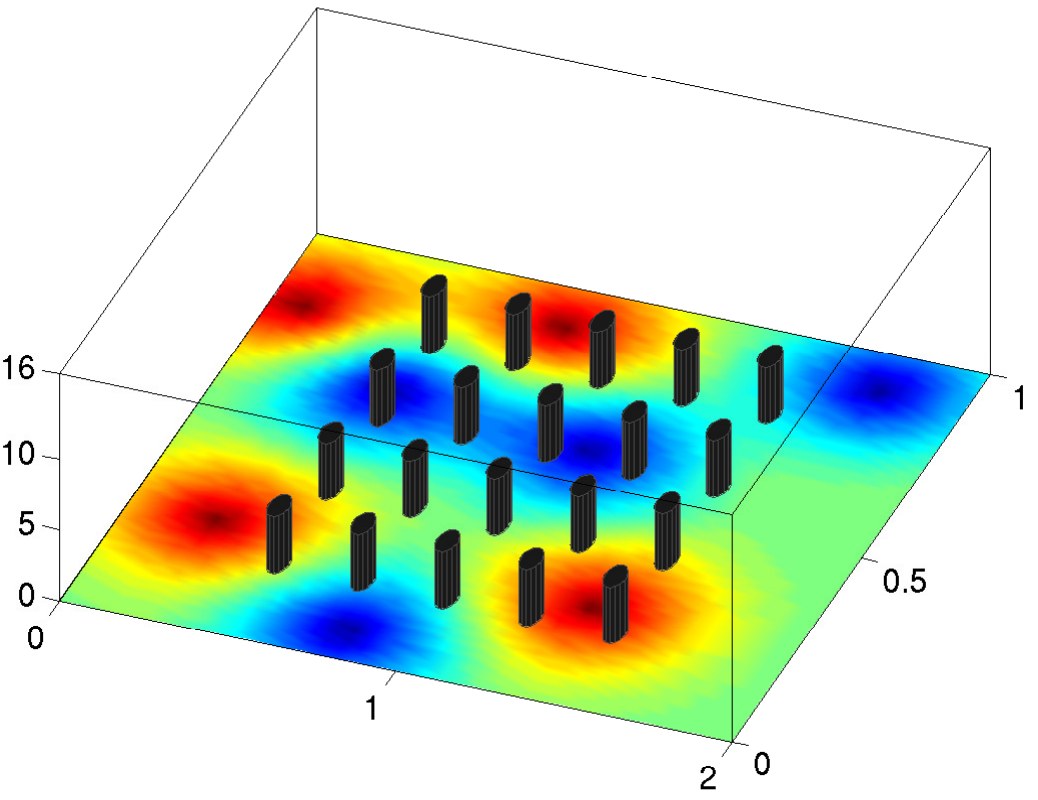}&
\includegraphics[width=.4\textwidth]{./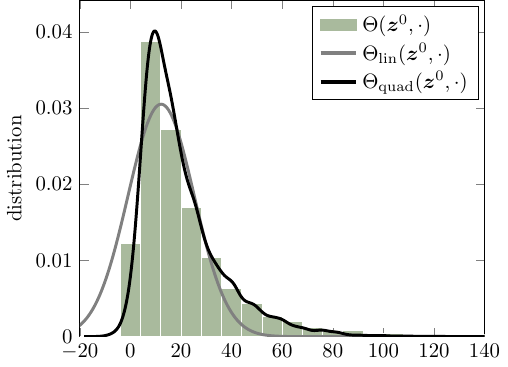}
\end{tabular}
\caption{Left: Control
  $\vec{\ctrl}^0$ with constant values $z_i^0 = 4$, $i \in \{1,
  \ldots, \Nc\}$ (in black). Right: Histogram of $\QoI(\vec{\ctrl}^0,
  \ipar)$ and the distributions of $\QoI_{\mathup{lin}}$
  and $\QoI_{\mathup{quad}}$, which are based on a first and
  second-order expansions of the parameter-to-objective function~\eqref{equ:controlobj}.
}
\label{fig:init_control}
\end{figure}

In Figure~\ref{fig:ouu-quad} we show the risk-averse optimal control
for the risk-aversion parameter
$\beta=1$, the control cost weight $\gamma=10^{-5}$, and where 
we use $\Ntr = 40$ trace estimator vectors for approximating the
traces in the OUU objective function. 
To cope with the nonconvexity of the OUU objective function, we use
a continuation strategy with respect to $\beta$, i.e.,
we solve a sequence of optimization problems with increasing $\beta$,
namely $\{\beta_k\}_{k = 1}^5 = \{0, 0.25, 0.5, 0.75, 1\}$.
The average number of quasi-Newton interior point iterations required
to decrease the residual by $5\times10^{-4}$ was $65$.
The left figure shows the magnitude of the control vector at each injection
well and the right figure depicts the statistical distribution of the
control objective at the computed optimal control for both, $\QoI$ and
$\QoI_{\mathup{quad}}$ (note that the optimal control is based on the
latter). To assess how successful the optimal control is in reducing the
mean and the standard deviation, compare the right images in
Figures~\ref{fig:ouu-quad} and \ref{fig:init_control}.
Notice that compared to the constant control $\vec{\ctrl}^0$, the
optimal control results in a distribution of the objective that is
both shifted to the left (reduction in the mean) and has less spread
(reduction in variance).

\begin{figure}[ht]\centering
\begin{tabular}{cc}
\includegraphics[width=.4\textwidth]{./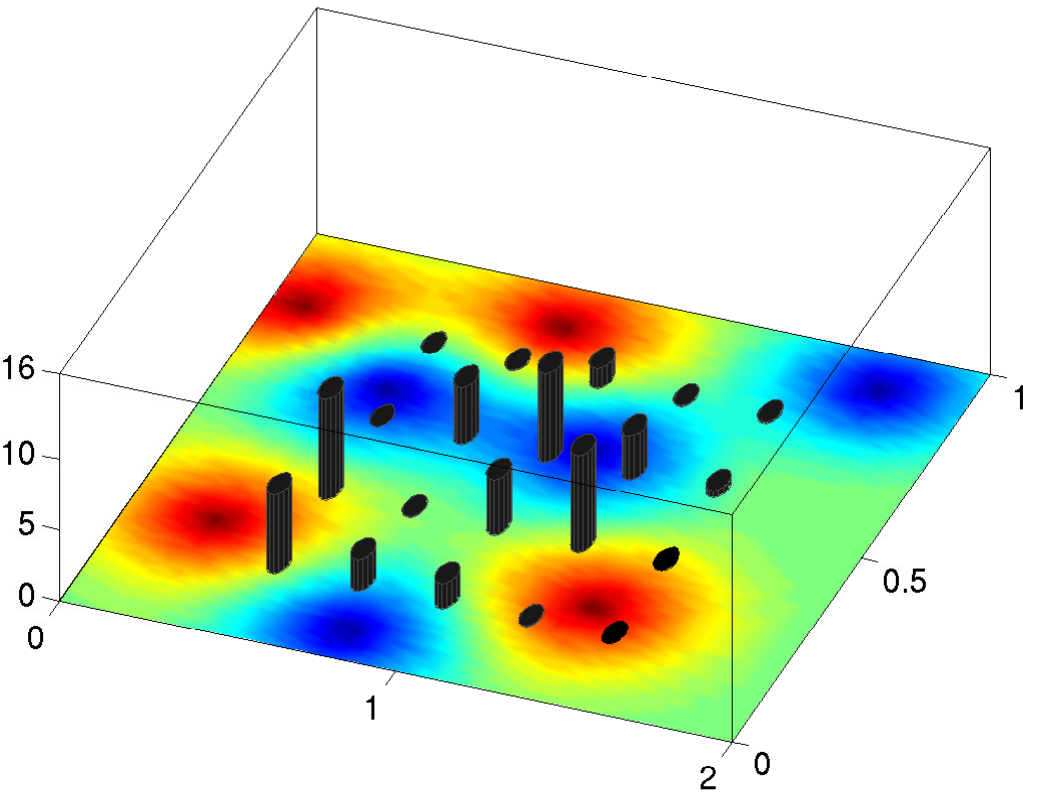}&
\includegraphics[width=.4\textwidth]{./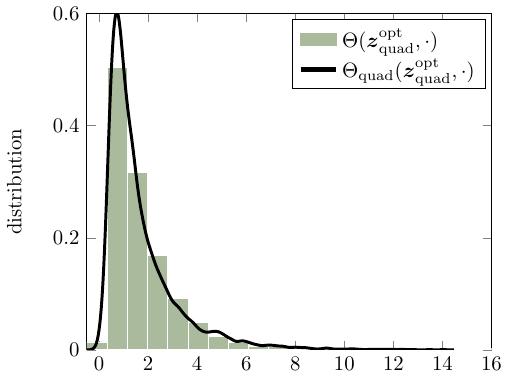}
\end{tabular}
\caption{ Left: The optimal control
  $\vec{\ctrl}^\mathup{opt}_\mathup{quad}$ computed by minimizing the OUU
  objective function with quadratic approximation to the
  parameter-to-objective map~\eqref{equ:controlobj}, and $\beta = 1,
  \gamma=10^{-5}$ and $\Ntr = 40$.  The initial guess for the
  optimizer was  $\vec{\ctrl}^0$ defined in
  Figure~\ref{fig:init_control}.  Right: The distributions of
  $\QoI$ and $\QoI_{\mathup{quad}}$ for the optimal control
  $\vec{\ctrl}^\mathup{opt}_\mathup{quad}$.
}
\label{fig:ouu-quad}
\end{figure}

Next, we study if using the linear (rather than the
quadratic) approximation to the parameter-to-objective map for the
computation of risk-averse optimal control can lead to suboptimal
results. Using $\QoI_\mathup{lin}$ instead of $\QoI_\mathup{quad}$ results in a
simplified version of~\eqref{equ:ouu_cost_quad}, obtained by neglecting
terms involving the Hessian $\Hess$.
We solve the same risk-averse OUU problem as before but with
$\QoI_\mathup{lin}$ rather than $\QoI_\mathup{quad}$. The resulting
optimal control $\vec{\ctrl}_\mathup{lin}^\mathup{opt}$ is shown on the
left of Figure~\ref{fig:ouu-linear},
and the distributions of $\QoI(\vec{\ctrl}_\mathup{lin}^\mathup{opt}, \ipar)$, 
$\QoI_\mathup{lin}(\vec{\ctrl}_\mathup{lin}^\mathup{opt}, \ipar)$ and
$\QoI_\mathup{quad}(\vec{\ctrl}_\mathup{lin}^\mathup{opt}, \ipar)$ are
shown on the right.
Note the large discrepancy between the distributions; in
particular the distribution based on $\QoI_\mathup{lin}$ is a poor
approximation to the actual distribution.
Our goal, however, is to find a control such that the distribution of
the control objective $\QoI$ has small mean and variance. In the
present example, the optimal control computed with the linear
approximation of the parameter-to-objective function $\QoI_\mathup{lin}$
does not perform much worse than the optimal control found with
$\QoI_\mathup{quad}$ in terms of reducing the mean and variance of the
distribution of $\QoI$---despite the poor approximation shown in
Figure~\ref{fig:ouu-linear}. This can be seen from
Figure~\ref{fig:ouu-comapare-linear-quadratic}, where we compare the
distribution of $\QoI$ for the optimal controls
$\vec{\ctrl}_\mathup{lin}^\mathup{opt}$ and
$\vec{\ctrl}_\mathup{quad}^\mathup{opt}$.
We notice that the distribution of
$\QoI(\vec{\ctrl}_\mathup{lin}^\mathup{opt}, \ipar)$ has slightly larger
mean and variance, as can be observed by the thicker tail of the distribution.
\begin{figure}[ht]\centering
\includegraphics[width=.4\textwidth]{./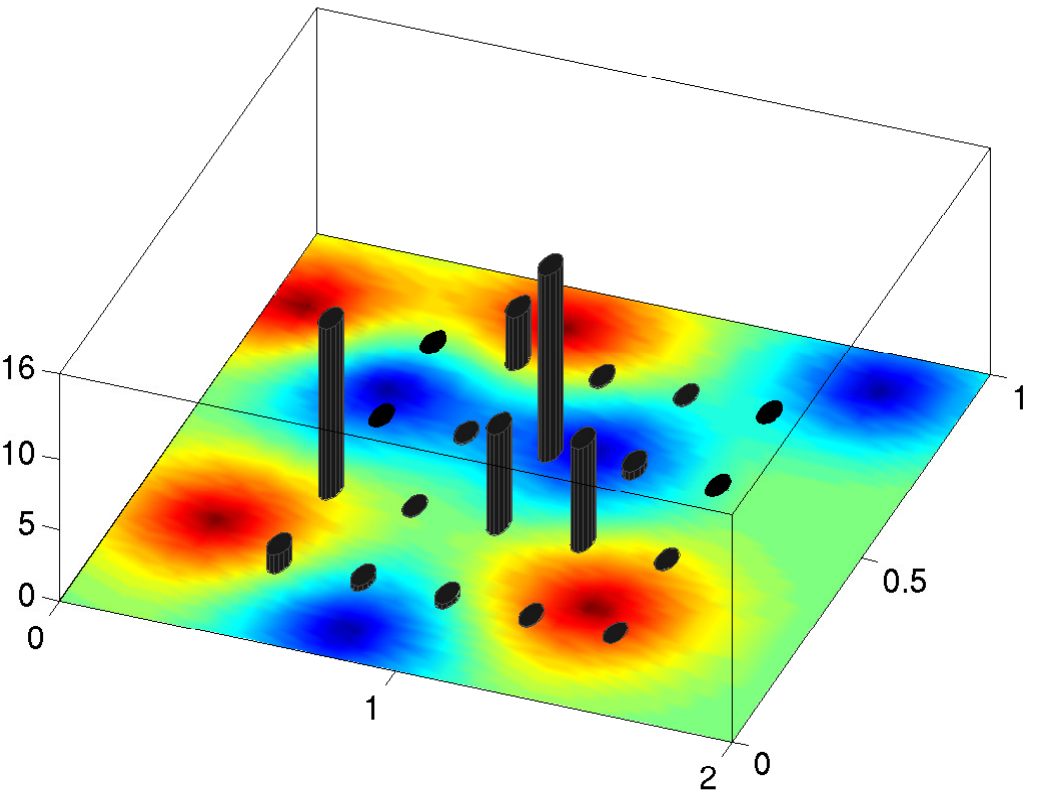}
\includegraphics[width=.4\textwidth]{./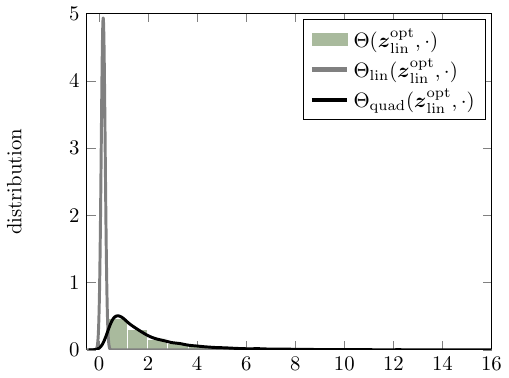}
\caption{Left: Optimal control $\vec{\ctrl}^\mathup{opt}_\mathup{lin}$ computed by minimizing
the OUU objective function with linear approximation to the parameter-to-objective map, and
$\beta = 1, \gamma=10^{-5}$.
Right: Distributions of the control objective and its approximations
for $\vec{\ctrl} = \vec{\ctrl}^\mathup{opt}_\mathup{lin}$.}
\label{fig:ouu-linear}
\end{figure}

\begin{figure}[ht]\centering
\includegraphics[width=0.9\textwidth]{./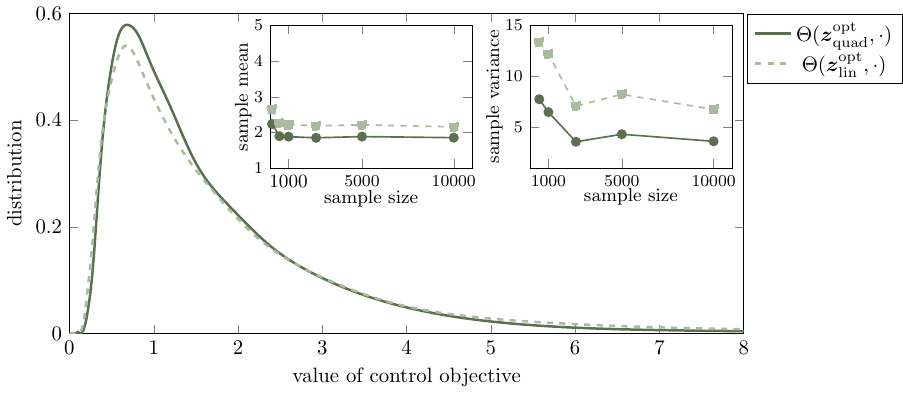}
\caption{Comparison of the distributions of $\QoI(\vec{\ctrl}, \cdot)$
  for $\vec{\ctrl} = \vec{\ctrl}_\mathup{lin}^\mathup{opt}$
  (dashed) and $\vec{\ctrl} = \vec{\ctrl}_\mathup{quad}^\mathup{opt}$ (solid) for $\beta
  = 1$, $\gamma = 10^{-5}$, and $\Ntr = 40$ trace estimation
  vectors.
  The inserts show the Monte Carlo sample convergence for
  the mean and the variance of the distributions.
}
\label{fig:ouu-comapare-linear-quadratic}
\end{figure}

\paragraph{Influence of risk-aversion parameter $\beta$ on the optimal control}
Next, we study the effect of the parameter $\beta$ in 
\eqref{equ:ouu_cost_quad} on the optimal control and the
corresponding distribution of the control objective. In
Figure~\ref{fig:beta_study}, we show results for the mean and the
variance for various risk-aversion parameters $\beta$.  While the
optimal controls have been computed using $\QoI_\mathup{quad}$, we
also report the mean and
variance of $\QoI$.  With increasing
$\beta$ the mean of $\QoI_\mathup{quad}$ increases and the variance of
$\QoI_\mathup{quad}$ decreases, as expected. The optimal controls were computed
using a fixed randomized trace estimator with $\Ntr = 40$.

Next, we consider the effect of increasing $\beta$ on the distribution
of $\QoI$ itself.  While for small values, increasing $\beta$ results
in smaller variances for $\QoI(\vec{\ctrl}, \ipar)$, as desired, for
larger values of $\beta$, the variance increases with $\beta$. This
can be attributed to the fact that for computing the optimal control,
we use the quadratic approximation $\QoI_\mathup{quad}(\vec{\ctrl},
\ipar)$, which only approximates the moments of the control objective
$\QoI(\vec{\ctrl}, \ipar)$.

\begin{figure}[ht]\centering
\includegraphics[width=.8\textwidth]{./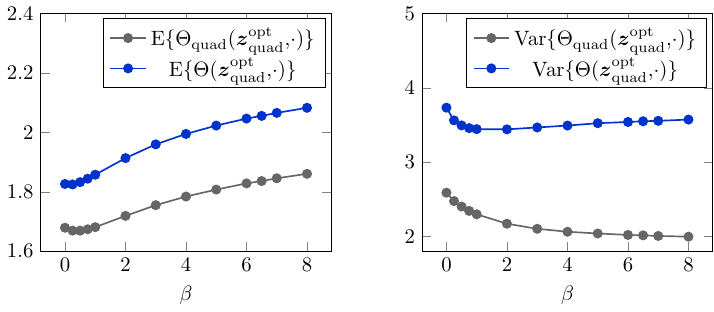}
\caption{Effect of risk aversion parameter $\beta$ on the mean and the
  variance of the control objective~\eqref{equ:controlobj} and its
  quadratic approximation at the optimal control
  $\vec{\ctrl}_\mathup{quad}^\mathup{opt}$. We report statistical values
  for $\QoI_\mathup{quad}$, which has been used in the computation
  of the optimal control, as well as statistical values for the true
  objective $\QoI$. This plots were generated using Monte Carlo
  sampling, with $10,000$ sample points.}
\label{fig:beta_study}
\end{figure}

\paragraph{Comparison between quadratic approximation and Monte Carlo}

Next, we compare the computational cost for computing the optimal
controls using the approach based on the quadratic approximation versus an
SAA approach where Monte Carlo sampling is used to
approximate~\eqref{equ:risk-averse-general}. 
In Figure~\ref{fig:mc_comp} (left), we plot the true OUU objective,
approximated accurately using a large number of samples, against the cost per
iteration measured in number of PDE solves required for objective and gradient
computation,  for three different risk-aversion parameters $\beta$. 
For the
quadratic approximation, we approximate the trace terms in the OUU
objective function using randomized trace estimation (dash-dotted
lines) and using eigenvectors of the covariance-preconditioned Hessian
operator (solid lines), as described in
Section~\ref{sec:ouu_objective}. For the randomized trace estimation we 
use $\Ntr$ values in the range 1--100, and use the same trace estimator
vectors for different values of $\beta$; for the eigenvector-based
approach, we use $\Ntr \in \{1, \ldots, 10\}$. The eigenvectors are computed for a
reference (namely the initial) control. The Monte Carlo sampling
approach is shown with dashed line; for this approach,
we use the same sequence of samples, 
for the different values of $\beta$,
with samples of size $10$, $20$, $40$, $80$, 
$160$, and $320$. Note that in the Monte Carlo approach,
the cost per iteration for objective and gradient computation is 
twice the size of the Monte Carlo sample used.

For a more precise comparison, in
Figure~\ref{fig:mc_comp} (right) for each case we list the data values
corresponding to the largest number of PDE solves. The figure in the
left shows a few important results: (1) the eigenvector-based trace
estimation outperforms the randomized trace estimation for all choices
of $\beta$; in both cases, as the number of vectors used for the
approximation of the trace
increases, the function values level off, indicating convergence 
of the respective optimal controls; (2) 
the optimal controls computed using Monte Carlo sampling of
the objective asymptotically converge to the exact optimal controls of
the true OUU objective; hence, these controls will asymptotically
outperform optimal controls based on the quadratic approximation; and
(3) for low risk-aversion (i.e., small $\beta$) or for limited compute
time (i.e., when only a small number of PDE-solves is afforded), the
quadratic approximation approach is superior to the Monte Carlo SAA
approach.

\begin{figure}\centering
\begin{tikzpicture}
\node (fig) at (-8,0){
\includegraphics[width=.5\textwidth]{./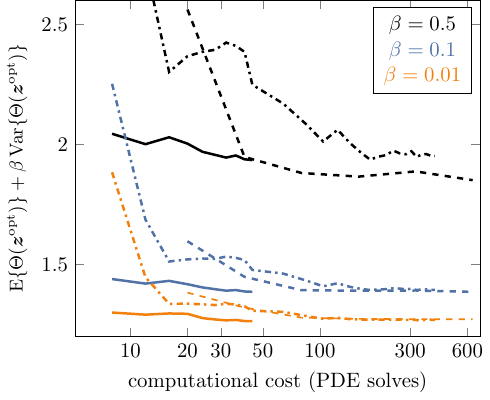}};
\node (tab) at (0,0) {
\begin{tabular}{c|c|c|c}
\hline
   $\beta$ & $\text{MC}$ & $\text{quad}_1$ & $\text{quad}_2$ \\
\hline
   0.5     & 1.852  &  1.935 & 1.952 \\
   0.1     & 1.386  &  1.387 & 1.395 \\
   0.01    & 1.272  &  1.264 & 1.269 \\
   \hline
\end{tabular}};
\end{tikzpicture}
\caption{Left: OUU objective evaluated at differently computed optimal
  controls $\vec{\ctrl}^\mathup{opt}$ versus the number of PDE solves
  required for one objective/gradient computation. Shown are results
  with different risk aversion parameters $\beta$ for optimal controls
  computed using a quadratic approximation (solid and dash-dotted
  lines) and Monte Carlo sampling (dashed lines). For the Monte Carlo
  sampling, the same sequence of samples was used for the different
  values of $\beta$. For the quadratic
  approximation, to estimate the trace we used both the basis of
  eigenvectors of the covariance-preconditioned Hessian, evaluated at
  the initial point $\vec\ctrl_0$ (solid line) and random vectors
  (dash-dotted line).  Right: The OUU objective values corresponding
  to the largest number of PDE solves for various $\beta$ values. The
  columns labeled with MC, quad${_1}$, and quad$_2$ correspond to the
  Monte Carlo sampling, quadratic approximation with eigenvector- and
  random vectors-based trace estimations, respectively. To approximate
  the OUU objective at the computed optimal controls, we use a fixed 
  Monte Carlo sample of size 
  10,000. 
\label{fig:mc_comp}}
\end{figure}

\section{Conclusions}
We propose a scalable method for risk-averse optimal control of systems
governed by PDEs with uncertain parameter fields.
Our approach uses a quadratic approximation of the
parameter-to-objective map, which
enables computing the moments appearing in the OUU objective
function analytically. Moreover, we employ randomized trace
estimators for the operator traces in the OUU objective function. 
The resulting optimization  
problem is constrained by the PDEs defining the gradient $\Grad$ and
the linear action of the Hessian $\Hess$. The resulting method
for risk averse OUU is applicable to
problems with high-dimensional discretized parameter spaces.
This is demonstrated in numerical tests, where we present results for a problem
with a $3{,}200$-dimensional (discretized) parameter space.  Hence, our
approach provides a practical alternative to computationally expensive
sampling-based OUU methods.
The advantages of our method compared to sampling/quadrature methods
are even more pronounced in the context of risk-averse
optimal control of systems governed by \emph{nonlinear} PDEs.  Whereas our
approach requires only one nonlinear PDE solve and about $2\times\Ntr$
linear(ized) PDE solves (to evaluate the OUU objective function), the required
number of nonlinear PDE solves for sampling approaches scales with the number
of samples.

\appendix
\section{Expectation of the truncation error in a Taylor expansion}\label{apdx:taylor_series}
We begin by stating a well-known result~\cite[Theorem
6.6]{Stuart10} regarding Gaussian measures on a Hilbert space. 
\begin{theorem}\label{thm:Gaussian_moments}
Let $\mu = \GM{0}{\C}$ be a Gaussian measure on a Hilbert space $\hilb$. 
For any integer $r \geq 1$, there is a constant $B = B_r \geq 0$ such that, 
\begin{equation}\label{equ:Gaussian_moments}
\int_\hilb \norm{x}^{2r} \, \mu(dx) \leq B \trace(\C)^r.
\end{equation}
\end{theorem}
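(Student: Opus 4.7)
The plan is to reduce the infinite-dimensional moment bound to a finite-dimensional Gaussian moment computation via the Karhunen--Lo\`eve (KL) expansion. Let $\{e_i\}_{i=1}^\infty$ be an orthonormal basis of eigenvectors of $\C$ with corresponding positive eigenvalues $\{\lambda_i\}$. Then a $\mu$-distributed element $x$ admits the representation $x = \sum_{i=1}^\infty \sqrt{\lambda_i}\,\xi_i e_i$ with $\xi_i$ i.i.d.\ standard normals, and Parseval's identity gives $\|x\|^2 = \sum_{i=1}^\infty \lambda_i \xi_i^2$ almost surely.

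I would next introduce truncations $S_n = \sum_{i=1}^n \lambda_i \xi_i^2$, so that $S_n \nearrow \|x\|^2$ and, by monotone convergence, $\ave\{S_n^r\}\to \int_\hilb \|x\|^{2r}\,\mu(dx)$ as $n\to\infty$. Expanding $S_n^r$ via the multinomial theorem, taking expectation and using independence together with the standard normal moment identity $\ave\{\xi^{2k}\} = (2k-1)!!$ (with convention $(-1)!!:=1$) then yields
\[
    \ave\{S_n^r\} \;=\; \sum_{|\alpha|=r}\binom{r}{\alpha}\prod_{i=1}^n \lambda_i^{\alpha_i}\prod_{i=1}^n (2\alpha_i-1)!!,
\]
where $\alpha$ ranges over multi-indices of nonnegative integers summing to $r$.

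The key step---and the one I expect to be the main obstacle---is a combinatorial inequality: for every multi-index with $|\alpha|=r$, one should have $\prod_i (2\alpha_i-1)!! \le (2r-1)!!$. I plan to derive it by iterating the two-block bound $(2a-1)!!(2b-1)!!\le (2(a{+}b)-1)!!$, which itself follows at once from the pointwise estimate $(2a{+}2k{-}1)\ge (2k{-}1)$ for $k=1,\ldots,b$. Granted this inequality, the multinomial theorem applied in reverse yields
\[
    \ave\{S_n^r\} \;\le\; (2r-1)!! \sum_{|\alpha|=r}\binom{r}{\alpha} \prod_{i=1}^n \lambda_i^{\alpha_i} \;=\; (2r-1)!!\,\Bigl(\sum_{i=1}^n \lambda_i\Bigr)^r \;\le\; (2r-1)!!\,\trace(\C)^r.
\]
Letting $n\to\infty$ then delivers the bound with the explicit constant $B_r = (2r-1)!!$.
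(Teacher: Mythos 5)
Your proof is correct, and it takes a genuinely different route from the paper: the paper does not prove this statement at all, but simply cites it as a known result (Theorem 6.6 in Stuart's 2010 Acta Numerica article), adding only the remark that the case $r=2$ follows from the quadratic-form identity $\int_\hilb \norm{x}^4\,\mu(dx) = 2\trace(\C^2)+\trace(\C)^2 \leq 3\trace(\C)^2$. Your argument, by contrast, is self-contained and elementary: the Karhunen--Lo\`eve representation reduces the bound to moments of the independent sum $\sum_i \lambda_i \xi_i^2$, the multinomial expansion with $\ave\{\xi^{2k}\}=(2k-1)!!$ makes everything explicit, and your two-block inequality $(2a-1)!!\,(2b-1)!!\leq (2(a+b)-1)!!$ (whose proof via $2a+2k-1\geq 2k-1$ is correct, and which iterates to $\prod_i(2\alpha_i-1)!!\leq(2r-1)!!$) is exactly the right combinatorial lemma to recombine the multinomial sum into $\bigl(\sum_i\lambda_i\bigr)^r$. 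Each step (a.s.\ convergence of the KL series, monotone convergence for $S_n^r\nearrow\norm{x}^{2r}$, independence) is justified. What your approach buys beyond the citation is an explicit constant $B_r=(2r-1)!!$, which is in fact sharp---equality holds when $\C$ has rank one, since then the left-hand side is the $2r$th moment of a one-dimensional Gaussian---and which is consistent with the paper's $r=2$ remark, where the constant is $3=(2\cdot 2-1)!!$. The only cosmetic caveat is that you implicitly assume $\C$ is injective when calling all eigenvalues positive; if $\C$ has a nontrivial kernel, the measure is supported on $\overline{\mathrm{range}(\C)}$ and the same argument applies verbatim there.
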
%
Note that the above theorem, for the case of $r = 2$,
follows from~\eqref{equ:bound_forth_moment}.
Next, we define the notation $T [m^p] := T[m, m, \cdots, m]$
for the action of a $p$-linear map $T:\hilb \times \hilb \times \cdots \times \hilb \to \R$.
\begin{lemma}\label{lem:technical_result}
Let $\mu = \GM{0}{\C}$, and 
suppose $T$ is a symmetric $p$-linear map, with $p \geq 2$, such that $|T[m^p]|\leq K \norm{m}^p$ for all $m \in \hilb$. 
Then, we have,
$\int_\hilb |T[m^p]|\, \mu(dm) \leq \tilde{K} \trace(\C)^{p/2}$ for a positive constant $\tilde{K}$. 
\end{lemma}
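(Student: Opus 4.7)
The plan is to combine the pointwise bound $|T[m^p]| \leq K\norm{m}^p$ with the Gaussian moment estimate of Theorem~\ref{thm:Gaussian_moments}. First, I would observe that
\[
\int_\hilb |T[m^p]|\,\mu(dm) \leq K \int_\hilb \norm{m}^p \,\mu(dm),
\]
so it suffices to bound the $p$th absolute moment of $\norm{m}$ by $\tilde C\,\trace(\C)^{p/2}$ for some constant $\tilde C$. For even $p = 2r$ this follows immediately from Theorem~\ref{thm:Gaussian_moments}, yielding $B_r \trace(\C)^r$. The only real subtlety is handling odd $p$, since the theorem is stated for even exponents.

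To handle odd $p$ (and give a uniform treatment), I would apply Cauchy--Schwarz with the constant function $1$:
\[
\int_\hilb \norm{m}^p\,\mu(dm) \leq \Bigl(\int_\hilb \norm{m}^{2p}\,\mu(dm)\Bigr)^{1/2}\Bigl(\int_\hilb 1 \, \mu(dm)\Bigr)^{1/2} = \Bigl(\int_\hilb \norm{m}^{2p}\,\mu(dm)\Bigr)^{1/2}.
\]
Now the exponent on the right is even, so Theorem~\ref{thm:Gaussian_moments} with $r = p$ yields
\[
\int_\hilb \norm{m}^{2p}\,\mu(dm) \leq B_p \,\trace(\C)^p,
\]
and taking square roots gives $\int_\hilb \norm{m}^p\,\mu(dm) \leq \sqrt{B_p}\,\trace(\C)^{p/2}$.

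Combining these two steps, I would conclude
\[
\int_\hilb |T[m^p]|\,\mu(dm) \leq K \sqrt{B_p}\,\trace(\C)^{p/2},
\]
so the claim holds with $\tilde{K} := K\sqrt{B_p}$. There is no substantive obstacle here beyond the Cauchy--Schwarz trick used to replace an odd moment by the square root of an even moment; note that the symmetry and $p$-linearity of $T$ enter only through the pointwise bound in the hypothesis and are not used further in the estimate.
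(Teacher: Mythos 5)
Your proof is correct and follows essentially the same route as the paper: bound $|T[m^p]|$ pointwise by $K\norm{m}^p$, then control the resulting Gaussian moment via Theorem~\ref{thm:Gaussian_moments}, using Cauchy--Schwarz to handle odd $p$. The only (immaterial) difference is the choice of Cauchy--Schwarz split: the paper writes $\norm{m}^{2k+1} = \norm{m}\cdot\norm{m}^{2k}$ and combines $\int_\hilb \norm{m}^2\,\mu(dm) = \trace(\C)$ with the theorem at $r = 2k$, whereas you pair $\norm{m}^p$ against the constant function $1$ and invoke the theorem once at $r = p$; both yield the stated bound $\tilde{K}\,\trace(\C)^{p/2}$.
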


\emph{Proof.}
For  $p = 2$, the proof is straightforward. 
For $p > 2$, if $p$ is even, i.e., $p = 2k$ for $k \in \Z_+$, then
$
\int_\hilb |T[m^{2k}]|  \mu(dm) \leq K \int_\hilb \norm{m}^{2k} \mu(dm) \leq \tilde{K} \trace(\C)^k, 
$
where the last inequality follows from Theorem~\ref{thm:Gaussian_moments}. 
Note that here $\tilde{K} = KB$ with $B$ from~\eqref{equ:Gaussian_moments}.
In the case $p = 2k+1$ for $k \in Z_+$,
we have,
\[
\begin{aligned}
\int_\hilb &|T[m^{2k+1}]|  \mu(dm)
\leq K \int_\hilb \norm{\ipar}^{2k+1}\,\mu(d\ipar)
\\
&\leq K\Big[\int_\hilb \norm{\ipar}^2 \, \mu(dm)\Big]^{1/2} \Big[\int_\hilb \norm{\ipar}^{4k} \, \mu(dm)\Big]^{1/2}
\leq 
\tilde{K} \trace(\C)^{1/2} \trace(\C)^{k} = \tilde{K} \trace(\C)^{(2k+1)/2}.~\qed
\end{aligned}
\]

\begin{proposition}\label{prp:higher_order_expansions}
Let $\QoI: (\hilb,\borel(\hilb), \mu) \to (\R, \borel(\R))$ be, almost surely, a $p + 1$ times continuously differentiable function, 
and assume $\ipar = \GM{\iparb}{\eps \C}$, with $\eps > 0$.
Suppose $\QoI$ has  $p+1$ uniformly bounded derivatives. Then,
the expected value of the truncation error of the $p$th order Taylor
expansion is
$\O(\eps^{(p+1)/2})$. 
\end{proposition}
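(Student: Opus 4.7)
The plan is to reduce everything to the same two-step mechanism used in section~\ref{sec:trucation_error} for the quadratic case: (1) control the Taylor remainder pointwise by a constant times $\|\ipar-\iparb\|^{p+1}$, and (2) integrate this bound against the Gaussian law using the moment estimates already established in the appendix.

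First I would write the Lagrange (or integral) form of the remainder of the $p$th order Taylor expansion of $\QoI$ about $\iparb$,
\[
R_p(\ipar;\iparb) = \QoI(\ipar) - \sum_{k=0}^{p}\frac{1}{k!}\QoI^{(k)}(\iparb)\big[(\ipar-\iparb)^{k}\big] = \frac{1}{(p+1)!}\QoI^{(p+1)}(\xi)\big[(\ipar-\iparb)^{p+1}\big],
\]
where $\xi$ lies on the segment between $\iparb$ and $\ipar$. The uniform boundedness assumption on $\QoI^{(p+1)}$ provides a constant $K>0$ with $|\QoI^{(p+1)}(\xi)[h^{p+1}]|\le K\|h\|^{p+1}$ for every $\xi,h\in\hilb$, so that $|R_p(\ipar;\iparb)|\le \frac{K}{(p+1)!}\|\ipar-\iparb\|^{p+1}$ holds pointwise.

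Next, since $\ipar\sim\GM{\iparb}{\eps\C}$ implies that $h=\ipar-\iparb$ is distributed according to $\nu_\eps=\GM{0}{\eps\C}$, I would change variables and apply exactly the argument from the proof of Lemma~\ref{lem:technical_result} (using Theorem~\ref{thm:Gaussian_moments} in the even case $p+1=2k$, and Cauchy--Schwarz combined with Theorem~\ref{thm:Gaussian_moments} in the odd case $p+1=2k+1$) to the scalar quantity $\|h\|^{p+1}$. Because $\trace(\eps\C)=\eps\trace(\C)$, this yields
\[
\int_\hilb |R_p(\ipar;\iparb)|\,\mu_\eps(d\ipar)\;\le\;\frac{K}{(p+1)!}\int_\hilb \|h\|^{p+1}\,\nu_\eps(dh)\;\le\;\tilde K\,\trace(\eps\C)^{(p+1)/2}\;=\;\tilde K\,\eps^{(p+1)/2}\trace(\C)^{(p+1)/2},
\]
with a constant $\tilde K$ independent of $\eps$. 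Letting $\eps\to 0$ gives the claimed $\mathcal{O}(\eps^{(p+1)/2})$ behaviour.

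The only subtle point, and the one I would flag as the main technical obstacle, is that the Taylor remainder is a $(p+1)$-linear form that depends on $\xi=\xi(\ipar)$, so Lemma~\ref{lem:technical_result} does not literally apply with a single fixed $T$. This is handled by the uniform bound on $\QoI^{(p+1)}$, which lets one discard $\xi$ and reduce to integrating the scalar $\|h\|^{p+1}$ under $\nu_\eps$; the lemma's proof then goes through verbatim (with the splitting into even and odd $p+1$). A minor companion point worth noting is that one implicitly needs $\QoI(\ipar)$, together with all derivatives up to order $p+1$ evaluated at $\iparb$ and applied to the random perturbation, to be $\mu_\eps$-integrable, which again follows from the uniform boundedness hypothesis combined with Theorem~\ref{thm:Gaussian_moments}.
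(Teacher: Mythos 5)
Your proof is correct and follows essentially the same route as the paper: Lagrange form of the remainder, the uniform bound on $\QoI^{(p+1)}$ to reduce to $\|\ipar-\iparb\|^{p+1}$, the even/odd Gaussian moment estimates of Lemma~\ref{lem:technical_result} and Theorem~\ref{thm:Gaussian_moments}, and the scaling $\trace(\eps\C)=\eps\trace(\C)$. In fact you are slightly more careful than the paper, which cites Lemma~\ref{lem:technical_result} directly even though the remainder's multilinear form depends on $\xi(\ipar)$; your observation that the uniform bound lets one discard $\xi$ and integrate the scalar $\|h\|^{p+1}$ is exactly the justification the paper leaves implicit.
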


\emph{Proof.} 
Since $\QoI:\hilb \to \R$ is $p+1$ times continuously differentiable, 
\[
   \QoI(m) = 
      \QoI(\iparb) + \sum_{n = 1}^p \frac{1}{n!}\QoI^{(n)}(\iparb)(\ipar - \iparb)^n 
      + R_p(\ipar; \iparb). 
\]
The remainder term is given by
$R_p(\ipar; \iparb) = \frac{1}{(p+1)!}\QoI^{(p+1)}(\xi)[(\ipar - \iparb)^{p+1}]$,
where $\xi$ is in the interior of the line segment between $\ipar$ and $\iparb$.
Here we use the notation $\QoI^{(n)}$ for the $n$th derivative.
Now by assumption of the proposition, 
$\QoI^{(k+1)}$ is uniformly bounded on $\hilb$. Therefore, 
by Lemma~\ref{lem:technical_result} we get
$\ave\{ R_p(\ipar; \iparb)\}$
is $\O(\eps^{(p+1)/2})$.~\qed

\section{Derivation of the gradient of the OUU objective function $\J$}
\label{appnd:grad}
Here, we summarize the derivation of the gradient of the OUU objective
function presented in~\eqref{equ:ouu_cost_quad}.  To derive the
expression for the gradient, we employ a formal Lagrangian
approach~\cite{Troltzsch10,BorziSchulz12}, which uses a Lagrangian function composed
of the objective function~\eqref{equ:ouu_cost_quad} with the PDE
constraints~\eqref{equ:risk-averse-ouu-fwd}--\eqref{equ:risk-averse-ouu-inc-adj}
enforced through Lagrange multiplier functions.  This Lagrangian
function $\L$
for the OUU problem is given by:
\begin{equation*}%
\begin{aligned}
  \L&(\ctrl, u, p, \{\incu_j\}, \{\incp_j\}, \ad{u}, \ad{p}, \{\ad{\incu}_j\}, \{\ad{\incp}_j\}) \\
  &= \frac12 \euclidnorm{\Q u - \obsq}^2 + \frac1{2\Ntr} \sum_{j = 1}^\Ntr \ip{\zeta_j}{\psiexpdnj}\\
  & + \frac\beta2\ipbig{\gexpd}{\C[\gexpd]}
  + \frac\beta{4\Ntr}\sum_{j = 1}^\Ntr \norm{\C^{1/2}[\psiexpdnj]}^2\\
  &+ \frac\gamma2 \norm{\ctrl}^2
  + \ip{\Exp{\iparb} \nabla u}{\nabla \ad{u}} - \ip{b + F\ctrl}{\ad{u}}
  + \ip{\Exp{\iparb} \nabla p}{\nabla \ad{p}} + \ip{\Q^*(\Q u - \obsq)}{\ad{p}}\\
  &+ \sum_{j = 1}^\Ntr \Big[\ip{\Exp{\iparb} \nabla \incu_j}{\nabla \ad{\incu}_j} +
     \ip{\zeta_j\Exp{\iparb} \nabla u}{\nabla \ad{\incu}_j}\Big]\\
  &+ \sum_{j = 1}^\Ntr\Big[\ip{\Exp{\iparb} \nabla \incp_j}{\nabla \ad{\incp}_j}
  + \ip{\Q^*\Q \incu_j}{\ad\incp_j} +
  \ip{\zeta_j\Exp{\iparb} \nabla p}{\nabla \ad{\incp}_j}\Big].
\end{aligned}
\end{equation*}
The variables $(u, p, \{\incu_j\}, \{\incp_j\})\in \V^2 \times
(\V^\Ntr)^2$ are the OUU state variables and
$(\ad{u}, \ad{p}, \{\ad\incu_j\}, \{\ad\incp_j\})\in \V^2
\times (\V^\Ntr)^2$ are the OUU adjoint variables,
with  $j \in \{1, \ldots, \Ntr\}$.
Requiring that variations of $\L$ with respect to the OUU adjoint
variables vanish, we recover the OUU state
equations~\eqref{equ:risk-averse-ouu-fwd}-\eqref{equ:risk-averse-ouu-inc-adj}. 
The variations of $\L$ with respect to the OUU state variables are 
\begin{align*}
  \L_u[\ut{u}] &= \ip{\Q^* (\Q u - \obsq)}{\ut{u}}
  + \frac1{2\Ntr} \sum_{j = 1}^\Ntr\ip{\zeta_j}{\expduj}
  + \beta \ip{\Exp{\iparb}\nabla \ut{u}\cdot \nabla p}{\C[ \Grad(\iparb)]}\\
  & + \frac\beta{2\Ntr} \sum_{j = 1}^\Ntr \ip{\Exp{\iparb} (\zeta_j \nabla \ut{u} \cdot \nabla p
    + \nabla \ut{u} \cdot \nabla \incp_j)}{ \C [\psiexpdnj]}\\
  & + \ip{\Exp{\iparb} \nabla \ut{u}}{\nabla \ad{u}}
  + \ip{\Q^*\Q \ut{u}}{\ad{p}}
  + \sum_{j = 1}^\Ntr\ip{\zeta_j\Exp{\iparb} \nabla \ut{u}}{\nabla \ad{\incu}_j},\\
  \L_p[\ut{p}] &= \frac1{2\Ntr} \sum_{j = 1}^\Ntr\ip{\zeta_j}{\expdpj}
  + \beta \ip{\Exp{\iparb}\nabla u\cdot \nabla \ut{p}}{\C[ \Grad(\iparb)]}\\
  & + \frac\beta{2\Ntr} \sum_{j = 1}^\Ntr \ip{\Exp{\iparb} (\zeta_j \nabla u \cdot \nabla \ut{p}
    + \nabla \incu_j \cdot \nabla \ut{p})}{ \C [\psiexpdnj]}\\
  &+ \ip{\Exp{\iparb} \nabla \ut{p}}{\nabla \ad{p}}
  + \sum_{j = 1}^\Ntr \ip{\zeta_j\Exp{\iparb} \nabla \ut{p}}{\nabla \ad{\incp}_j},\\
  \L_{\incu_j}[\ut{\incu}] &=
     \frac1{2\Ntr}\ip{\zeta_j}{\Exp{\iparb} \nabla \ut{\incu} \cdot \nabla p}
  +  \frac\beta{2\Ntr} \ip{\Exp{\iparb}\nabla \ut{\incu} \cdot \nabla p}{\C [\psiexpdnj]}\\
  &+ \ip{\Exp{\iparb} \nabla \ut{\incu}}{\nabla \ad{\incu}_j}
  + \ip{\Q^*\Q \ut{\incu}}{\ad\incp_j},\\
  \L_{\incp_j}[\ut{\incp}] &=
  \frac1{2\Ntr}\ip{\zeta_j}{\Exp{\iparb} \nabla u \cdot \nabla \ut{\incp}}
  + \frac\beta{2\Ntr} \ip{\Exp{\iparb} \nabla u \cdot \nabla \ut{\incp} }{\C [\psiexpdnj]}\\
  &+ \ip{\Exp{\iparb} \nabla \ut{\incp}}{\nabla \ad{\incp}_j},
\end{align*}
with $(\ut{u}, \ut{p}, \ut{\incu}, \ut{\incp}) \in \V^4$.
Letting these variations vanish, results in the
OUU adjoint 
equations~\eqref{equ:ouu-adj-incp-simp-gen}-\eqref{equ:ouu-adj-u-simp-gen}.
Finally, the gradient for~\eqref{equ:ouu_cost_quad} is given by,
\begin{equation}\label{equ:ouu-reduced-gradient-appnd}
   \L_{z}[\ut{z}] := \G(z)\ut{z} = \gamma \ip{z}{\ut{z}} - \ip{F\ut{z}}{\ad{u}}, \quad
   \ut{z} \in L^2(\D).
\end{equation}

\vspace{-9mm}

\end{document}